\documentclass[12pt]{amsart}
\usepackage{amsmath,amsthm,amsfonts,amssymb,mathrsfs}
\date{\today}
\usepackage{color}

 \setlength{\textwidth}{18.2truecm}
 \setlength{\textheight}{24.5truecm}
 \setlength{\oddsidemargin}{-25pt}
 \setlength{\evensidemargin}{-25pt}
 \setlength{\topmargin}{-30pt}


\newtheorem{theorem}{Theorem}[section]
\newtheorem{proposition}[theorem]{Proposition}
\newtheorem{corollary}[theorem]{Corollary}
\newtheorem{lemma}[theorem]{Lemma}

\theoremstyle{definition}
\newtheorem{example}[theorem]{Example}
\newtheorem{remark}[theorem]{Remark}
\newtheorem{definition}[theorem]{Definition}

\usepackage{hyperref}

\begin{document}

\title[On some generalization of the bicyclic semigroup: the topological version]{On some generalization of the bicyclic semigroup: the topological version}

\author[M.~Cencelj]{Matija Cencelj}
\address{Faculty of Education, University of Ljubljana, Kardeljeva Pl. 16,
Ljubljana, 1000, Slovenia} \email{matija.cencelj@pef.uni-lj.si}

\author[O.~Gutik]{Oleg~Gutik}
\address{Department of Mechanics and Mathematics, Ivan Franko Lviv National
University, Universytetska 1, Lviv, 79000, Ukraine}
\email{{o\_gutik}@franko.lviv.ua, {ovgutik}@yahoo.com}

\author[D.~Repov\v{s}]{Du\v{s}an~Repov\v{s}}
\address{Faculty of Education, and
Faculty of Mathematics and Physics, University of Ljubljana, Kardeljeva Pl. 16,
Ljubljana, 1000, Slovenia} \email{dusan.repovs@guest.arnes.si}

\keywords{Topological semigroup, semitopological semigroup, bicyclic semigroup, closure, embedding, Baire space.
 }
\subjclass[2010]{Primary 22A15, 20M20;
Secondary 20M05, 54C25, 54E52}

\begin{abstract}
We show that every Hausdorff Baire topology $\tau$ on $\mathcal{C}=\langle a,b\mid a^2b=a, ab^2=b\rangle$ such that $(\mathcal{C},\tau)$ is a semitopological semigroup is discrete and we construct a nondiscrete Hausdorff semigroup topology on $\mathcal{C}$. We also discuss the closure of a semigroup $\mathcal{C}$ in a semitopological semigroup and prove that $\mathcal{C}$ does not embed into a topological semigroup with the countably compact square.
\end{abstract}

\maketitle

\section{Introduction and preliminaries}\label{s1}

In this paper all topological spaces are assumed to be Hausdorff.  If $Y$ is a subspace of a topological space $X$ and $A\subseteq Y$, then we shall denote the topological closure of $A$ in $Y$ by $\operatorname{cl}_Y(A)$. Further we shall follow the terminology of \cite{CHK, Clifford-Preston-1961-1967,
Engelking1989, Ruppert1984}.

For a topological space $X$, a family $\{A_s\mid s\in\mathscr{A}\}$ of subsets of $X$ is called \emph{locally finite} if for every point $x\in X$ there exists an open neighbourhood $U$ of $x$ in $X$ such that the set $\{s\in\mathscr{A}\mid U\cap A_s\neq\varnothing\}$ is finite. A subset $A$ of $X$ is said to be
\begin{itemize}
  \item \emph{co-dense} in $X$ if $X\setminus A$ is dense in $X$;
  \item an \emph{$F_\sigma$-set} in $X$ if $A$ is a union of a countable family of closed subsets in $X$.
\end{itemize}

We recall that a topological space $X$ is said to be
\begin{itemize}
  \item \emph{compact} if each open cover of $X$ has a finite subcover;
  \item \emph{countably compact} if each open countable cover of $X$ has a finite subcover;
  \item \emph{sequentially compact} if each sequence in $X$ has a convergent subsequence;
  \item \emph{pseudocompact} if each locally finite open cover of $X$ is finite;
  \item a \emph{Baire space} if for each sequence $A_1, A_2,\ldots, A_i,\ldots$ of nowhere dense subsets of $X$ the union $\displaystyle\bigcup_{i=1}^\infty A_i$ is a co-dense subset of $X$;
  \item  \emph{\v{C}ech complete} if $X$ is Tychonoff and for every compactification $cX$ of $X$, the remainder $cX\setminus X$ is an $F_\sigma$-set in $cX$;
  \item  \emph{locally compact} if every point of $X$ has an open neighbourhood with a compact closure.
\end{itemize}
According to Theorem~3.10.22 of \cite{Engelking1989}, a Tychonoff
topological space $X$ is pseudocompact if and only if each
continuous real-valued function on $X$ is bounded.

If $S$ is a semigroup, then we shall denote the {\it Green relations}
on
$S$ by $\mathscr{R}$ and $\mathscr{L}$  (see Section~2.1 of \cite{Clifford-Preston-1961-1967}):
\begin{align*}
    &\qquad a\mathscr{R}b \mbox{ if and only if } aS^1=bS^1; \qquad \hbox{and} \qquad \qquad a\mathscr{L}b \mbox{ if and only if } S^1a=S^1b.
\end{align*}

A semigroup $S$ is called \emph{simple} if $S$ does not contain any proper two-sided ideals.

A {\it semitopological} (resp. \emph{topological}) {\it semigroup}
is a topological space together with a separately (resp. jointly)
continuous semigroup operation.

An important theorem of Andersen \cite{Andersen} (see also \cite[Theorem~2.54]{Clifford-Preston-1961-1967}) states that in any [$0$-]simple semigroup which is not completely [$0$-]simple, each nonzero idempotent (if there are any) is the identity element of a copy of the bicyclic semigroup $\mathcal{B}(a,b)= \langle a,b \mid ab=1\rangle$. The bicyclic semigroup is bisimple and every one of its congruences is either trivial or a group congruence. Moreover, every non-annihilating homomorphism $h$ of the bicyclic
semigroup is either an isomorphism or the image of $\mathcal{B}(a,b)$ under $h$ is a cyclic group (see Corollary~1.32 in \cite{Clifford-Preston-1961-1967}). Eberhart and Selden  \cite{EberhartSelden1969} showed that every Hausdorff semigroup topology on the bicyclic semigroup $\mathcal{B}(a,b)$ is discrete. Bertman and West \cite{BertmanWest1976} proved that every Hausdorff topology $\tau$ on $\mathcal{B}(a,b)$ such that $(\mathcal{B}(a,b),\tau)$ is a semitopological semigroup is also discrete.  Neither stable nor $\Gamma$-compact
topological semigroups can contain a copy of the bicyclic semigroup~\cite{AHK, HildebrantKoch1986}. Also, the bicyclic semigroup cannot be embedded into any countably compact topological inverse semigroup~\cite{GutikRepovs2007}. Moreover, the conditions were given in \cite{BanakhDimitrovaGutik2009} and \cite{BanakhDimitrovaGutik2010} when a countably compact or pseudocompact topological semigroup cannot contain the bicyclic semigroup, which is topological semigroup with a countably compact square and with a pseudocompact square. However, Banakh, Dimitrova and Gutik~\cite{BanakhDimitrovaGutik2010} have constructed (assuming the Continuum Hypothesis or the Martin Axiom) an example of a Tychonoff countably compact topological semigroup which contains the bicyclic semigroup.

Jones \cite{Jones1987} found semigroups $\mathcal{A}$ and $\mathcal{C}$ which play a role similar to the bicyclic semigroup in Andersen's Theorem. Let  $\mathcal{A}=\langle a,b\mid a^2b=a\rangle$ and $\mathcal{C}=\langle a,b\mid a^2b=a, ab^2=b\rangle$. It is obvious that the semigroup $\mathcal{C}$ is a homomorphic image of $\mathcal{A}$, and the bicyclic semigroup is a homomorphic image of $\mathcal{C}$. Also, every non-injective homomorphic image of the semigroup $\mathcal{C}$ contains an idempotent. Jones \cite{Jones1987} showed that every [$0$-] simple idempotent-free semigroup $S$ on which $\mathscr{R}$ is nontrivial contains (a copy of) $\mathcal{A}$ or $\mathcal{C}$. Moreover, if $S$ is also $\mathscr{L}$-trivial and is not $\mathscr{R}$-trivial then it must contain $\mathcal{A}$ (but not $\mathcal{C}$), and if $S$ is both $\mathscr{R}$- and $\mathscr{L}$-nontrivial then $S$ must contain either $\mathcal{C}$ or both $\mathcal{A}$ and its dual $\mathcal{A}^d$.

In the general case, the countable compactness of topological semigroup $S$ does not guarantee that $S$ contains an idempotent. By Theorem~8 of~\cite{BanakhDimitrovaGutik2009}, a topological semigroup $S$ contains an idempotent if $S$ satisfies one of the following conditions: 1)~$S$ is doubly countably compact; 2)~$S$ is sequentially compact; 3)~$S$ is $p$-compact for some free ultrafilter $p$ on $\omega$; 4)~$S^{2^{\mathfrak c}}$ is countably compact; 5)~$S^{\kappa^\omega}$ is countably compact, where $\kappa$ is the minimal cardinality of a closed subsemigroup of $S$. This motivates the establishing of the semigroups $\mathcal{A}$ and $\mathcal{C}$ as topological semigroups, in particular their semigroup topologizations and the question of their embeddings into compact-like topological semigroups.

In this paper we study the semigroup $\mathcal{C}$ as a semitopological semigroup. We show that every Hausdorff Baire topology $\tau$ on $\mathcal{C}$ such that $(\mathcal{C},\tau)$ is a semitopological semigroup is discrete and we construct a nondiscrete Hausdorff semigroup topology on $\mathcal{C}$. We also discuss the closure of a semigroup $\mathcal{C}$ in a semitopological semigroup and prove that $\mathcal{C}$ does not embed into a topological semigroup with a countably compact square.


\section{Algebraic properties of the semigroup $C$}\label{s2}

The semigroup $\mathcal{C}=\langle a,b\mid a^2b=a, ab^2=b\rangle$ was introduced by R\'{e}dei \cite{Redei1960} and further studied by Megyesi and Poll\'{a}k \cite{Megyesi-Pollak1981} and  by Rankin and Reis \cite{Rankin-Reis1980}. Its salient properties are summarized here:

\begin{proposition}\label{proposition-2.1}
\begin{itemize}
  \item[$(i)$] $\mathcal{C}$ is a $2$-generated simple idempotent-free semigroup in which
      $a\mathscr{R}a^2$ and $b\mathscr{L}b^2$, so that $\mathscr{R}$ and $\mathscr{L}$ are nontrivial; however $\mathscr{H}$ is trivial.
  \item[$(ii)$] Each element of $\mathcal{C}$ is uniquely expressible as $b^k(ab)^la^m$, $k,l,m\geqslant 0$, $k+l+m>0$.
  \item[$(iii)$] The product of elements $b^k(ab)^la^m$ and $b^n(ab)^pa^q$ in $\mathcal{C}$ is equal to
\begin{equation}\label{eq-2.1}
\left\{
  \begin{array}{ll}
    b^{k+n-m}(ab)^pa^q, & \hbox{if~} m<n;\\
    b^{k}(ab)^{l+p+1}a^q, & \hbox{if~} m=n\neq 0;\\
    b^{k}(ab)^{l+p}a^q, & \hbox{if~} m=n=0;\\
    b^k(ab)^la^{q+m-n}, & \hbox{if~} m>n.
  \end{array}
\right.
\end{equation}
  \item[$(iv)$] The semigroup $\mathcal{C}$ is minimally idempotent-free (i.e., it is idempotent-free but each of its proper quotients contains an idempotent).
\end{itemize}
\end{proposition}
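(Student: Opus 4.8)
The plan is to prove parts $(ii)$ and $(iii)$ first, since they furnish a concrete model of $\mathcal{C}$ on which the remaining assertions become finite computations. Let $T=\{(k,l,m)\in\mathbb{Z}_{\geqslant0}^3\mid k+l+m>0\}$ and equip $T$ with the operation prescribed by the right-hand side of \eqref{eq-2.1}. First I would show, reading the defining relations $a^2b=a$ and $ab^2=b$ as the length-reducing rewriting rules $aab\to a$ and $abb\to b$, that every nonempty word in $a,b$ can be brought to the form $b^k(ab)^la^m$: the only nontrivial overlap of the two rules occurs in $aabb$, and both ways of reducing it yield $ab$, so the system is confluent and its reduced words are exactly the claimed normal forms. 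This establishes the existence half of $(ii)$.

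For uniqueness I would verify that the assignment $a\mapsto(0,0,1)$, $b\mapsto(1,0,0)$ respects the two relations inside $T$, so that it extends to a homomorphism $\bar\varphi\colon\mathcal{C}\to T$; a direct induction then gives $\bar\varphi(b^k(ab)^la^m)=(k,l,m)$. Since distinct normal forms are sent to distinct triples, the normal form is unique, $\bar\varphi$ is an isomorphism, and the product in $\mathcal{C}$ is literally \eqref{eq-2.1}; this completes $(ii)$ and $(iii)$. The bulk of the work here is checking that \eqref{eq-2.1} is associative, a routine but lengthy case analysis on the orderings of the matched exponents.

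Working in the model $T$, part $(i)$ becomes computational. Idempotent-freeness follows by setting $(k,l,m)^2=(k,l,m)$ and comparing the third coordinate of the first factor with the first coordinate of the second: each of the four cases of \eqref{eq-2.1} yields an impossible equation (such as $2l+1=l$ or $2l=l$), so $\mathcal{C}$ has no idempotent. The relations themselves give $a=a^2b\in a^2\mathcal{C}$ and $a^2=a\cdot a\in a\mathcal{C}$, hence $a\mathscr{R}a^2$, and dually $b=ab^2$ gives $b\mathscr{L}b^2$; since $a\neq a^2$ and $b\neq b^2$, this already shows $\mathscr{R}$ and $\mathscr{L}$ are nontrivial. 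A short computation with \eqref{eq-2.1} identifies the classes: the $\mathscr{R}$-class of $(k,l,m)$ is $\{(k,l,m')\mid m'\geqslant1\}$ if $m\geqslant1$ and $\{(k,l,0)\}$ if $m=0$, and, via the anti-automorphism $\theta\colon(k,l,m)\mapsto(m,l,k)$ (word reversal combined with $a\leftrightarrow b$), the $\mathscr{L}$-class of $(k,l,m)$ is $\{(k',l,m)\mid k'\geqslant1\}$ if $k\geqslant1$ and $\{(0,l,m)\}$ if $k=0$. Intersecting an $\mathscr{R}$-class with an $\mathscr{L}$-class always leaves a single triple, so $\mathscr{H}$ is trivial. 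Simplicity I would get by checking directly that $\mathcal{C}^1x\mathcal{C}^1=\mathcal{C}$ for every $x$: left multiplication by $a$ lowers the first coordinate, right multiplication reaches all sufficiently large first coordinates with arbitrary remaining entries, and the absorption $a\cdot ab=a$ lets one adjust the middle coordinate, so every triple is reachable from every other.

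Finally, $(iv)$: idempotent-freeness is part of $(i)$, so only minimality remains, namely that $\mathcal{C}/\rho$ contains an idempotent for every congruence $\rho\neq\Delta$. The mechanism I would use is monogenic torsion: the subsemigroup $\langle ab\rangle=\{(0,n,0)\mid n\geqslant1\}$ is free monogenic, and if a nontrivial congruence identifies two of its elements, say $(ab)^i\mathrel{\rho}(ab)^{i+d}$ with $d\geqslant1$, then the image of $ab$ generates a finite monogenic subsemigroup of $\mathcal{C}/\rho$, which necessarily contains an idempotent. The content is thus to show that every nontrivial congruence collapses two powers of $ab$; the relations make identifications propagate violently (for instance $b\mathrel{\rho}b^2$ already forces $a\mathrel{\rho}b$, which collapses all powers of $ab$), so the statement is very plausible, but a clean proof rests on the full description of the congruences of $\mathcal{C}$ obtained by Megyesi and Poll\'ak \cite{Megyesi-Pollak1981}. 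This congruence analysis is the main obstacle; by comparison the associativity check for \eqref{eq-2.1} and the $\mathscr{R}$/$\mathscr{L}$ computations, though tedious, are entirely mechanical.
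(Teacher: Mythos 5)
The paper never proves Proposition~\ref{proposition-2.1}: it is stated as a summary of known properties of $\mathcal{C}$, with the reader referred to R\'edei \cite{Redei1960}, Megyesi and Poll\'ak \cite{Megyesi-Pollak1981} and Rankin and Reis \cite{Rankin-Reis1980} (the idempotent-freeness statements also appear in Jones \cite{Jones1987}). So there is no in-paper argument to compare against; judged on its own terms, your proposal is correct and essentially complete for (i)--(iii), but part (iv) contains a genuine gap.

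Two remarks on (i)--(iii). First, you do redundant work: once the rewriting system $\{aab\to a,\ abb\to b\}$ is shown to be terminating and locally confluent (the unique critical pair $aabb$ resolves to $ab$ both ways), Newman's lemma gives not only existence but also \emph{uniqueness} of normal forms, since two words represent the same element of $\mathcal{C}$ if and only if they have the same irreducible descendant. Hence the detour through the model $T$ --- and with it the ``routine but lengthy'' associativity check, which you single out as the bulk of the work --- can be skipped entirely: formula~(\ref{eq-2.1}) then follows by reducing the concatenation of two normal forms, and associativity is inherited from the free semigroup. (Alternatively one can build $T$, check associativity, and skip confluence; doing both is harmless but wasteful.) Second, your computations in (i) all check out: the four idempotency equations are unsolvable (in the case $m=n=0$ note that $2l=l$ forces $l=0$, which is excluded because $(0,0,0)\notin T$); the relations give $a\mathscr{R}a^2$ and $b\mathscr{L}b^2$; the map $(k,l,m)\mapsto(m,l,k)$ is a genuine anti-automorphism because word reversal composed with $a\leftrightarrow b$ interchanges the two defining relations; and the $\mathscr{R}$- and $\mathscr{L}$-class descriptions and the reachability argument for simplicity are correct.

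The gap is (iv). You identify the right mechanism --- if two distinct powers of $ab$ are $\rho$-identified, the image of $ab$ generates a finite cyclic subsemigroup of $\mathcal{C}/\rho$, which contains an idempotent --- but the crucial claim, that \emph{every} nontrivial congruence identifies two distinct powers of $ab$, is exactly what you leave unproven, deferring it to the congruence classification of \cite{Megyesi-Pollak1981}. As written, your (iv) is a citation, not a proof (and so is no more self-contained than the paper itself). The claim does, however, admit a short direct verification from (\ref{eq-2.1}), so the classification can be avoided. Let $(k,l,m)\,\rho\,(k',l',m')$ with the triples distinct. If $m-k\neq m'-k'$, pick $N>\max(k,k')$; then $a^N\cdot b^k(ab)^la^m=a^{N+m-k}$ and likewise for the other element, so $a^s\,\rho\,a^t$ for some $1\leqslant s<t$. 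Multiplying on the right by $b^t$ gives $b^{t-s}\,\rho\,ab$ (since $a^sb^t=b^{t-s}$ and $a^tb^t=ab$), and multiplying that relation on the left by $ab$ gives $b^{t-s}\,\rho\,(ab)^2$ (since $ab\cdot b^{t-s}=b^{t-s}$), whence $ab\,\rho\,(ab)^2$ by transitivity. If $m-k=m'-k'$ and, say, $k<k'$, multiply on the right by $b^{m'}$ to get $b^{k'}\,\rho\,b^{k'}(ab)^{l'+1}$, then on the left by $a^{k'}$ to get $ab\,\rho\,(ab)^{l'+2}$. Finally, if $k=k'$ and $m=m'$ but $l\neq l'$, multiplying on the right by $b^m$ (when $m\geqslant1$) and on the left by $a^k$ (when $k\geqslant1$) yields two distinct powers of $ab$ directly. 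In every case $\mathcal{C}/\rho$ contains a finite monogenic subsemigroup and hence an idempotent; adding this argument makes your proof of (iv) complete and elementary.
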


\begin{definition}[{\cite{Koch-Wallace1957}}]\label{definition-2.2}
A semigroup $S$ is said to be \emph{stable} if the following conditions hold:
\begin{itemize}
  \item[$(i)$] $s,t\in S$ and $Ss\subseteq Sst$ implies that $Ss=Sst$; \; and
  \item[$(ii)$] $s,t\in S$ and $sS\subseteq tsS$ implies that $sS=tsS$.
\end{itemize}
\end{definition}

By formula (\ref{eq-2.1}) we have that
\begin{equation*}
    b\cdot b^n(ab)^pa^q=b^{n+1}(ab)^pa^q \qquad \hbox{and} \qquad a\cdot b\cdot b^n(ab)^pa^q=
    \left\{
      \begin{array}{ll}
        (ab)^{p+1}a^q, & \hbox{if~} n=0;\\
        b^n(ab)^pa^q, & \hbox{if~} n\geqslant 1,
      \end{array}
    \right.
\end{equation*}
for each $b^n(ab)^pa^q\in\mathcal{C}$. Hence we get that $b\cdot\mathcal{C}\subseteq a\cdot b\cdot\mathcal{C}$, but $b\cdot\mathcal{C}\neq a\cdot b\cdot\mathcal{C}$. This yields the following proposition:

\begin{proposition}\label{proposition-2.3}
The semigroup $\mathcal{C}$ is not stable.
\end{proposition}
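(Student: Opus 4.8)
The plan is to contradict condition $(ii)$ of Definition~\ref{definition-2.2} directly, reusing the products computed just above the statement. I would set $s=b$ and $t=a$, so that $sS=b\cdot\mathcal{C}$ and $tsS=a\cdot b\cdot\mathcal{C}$, and then show that the hypothesis $sS\subseteq tsS$ of $(ii)$ holds while its conclusion $sS=tsS$ fails. Exhibiting a single element of $(a\cdot b\cdot\mathcal{C})\setminus(b\cdot\mathcal{C})$, together with the inclusion $b\cdot\mathcal{C}\subseteq a\cdot b\cdot\mathcal{C}$, then forces $\mathcal{C}$ to be non-stable; note that no topology or compactness enters, since stability is a purely algebraic property here.

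For the inclusion I would read off from the first displayed product that every element of $b\cdot\mathcal{C}$ has the form $b^{n+1}(ab)^pa^q$, hence has $b$-exponent at least $1$. The second displayed formula shows that $a\cdot b$ fixes every such element, since $a\cdot b\cdot b^{m}(ab)^pa^q=b^{m}(ab)^pa^q$ whenever $m\geqslant 1$. Therefore each $y\in b\cdot\mathcal{C}$ satisfies $y=a\cdot b\cdot y\in a\cdot b\cdot\mathcal{C}$, which gives $b\cdot\mathcal{C}\subseteq a\cdot b\cdot\mathcal{C}$ cleanly.

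The strictness is the only point requiring care, and I expect it to be the sole genuine step. I would take the generator $b$ itself as the witness: the defining relation $ab^2=b$ gives $a\cdot b\cdot b=ab^2=b$, so $b\in a\cdot b\cdot\mathcal{C}$. On the other hand, if $b=b\cdot x$ held for some $x=b^n(ab)^pa^q\in\mathcal{C}$, then the first displayed formula would yield $b^{1}(ab)^{0}a^{0}=b^{n+1}(ab)^pa^q$, and the uniqueness of the canonical form (Proposition~\ref{proposition-2.1}$(ii)$) would force $n+1=1$, $p=0$, $q=0$, i.e. $x$ would be the empty word, which is not an element of $\mathcal{C}$. Thus $b\notin b\cdot\mathcal{C}$, so $b\cdot\mathcal{C}\neq a\cdot b\cdot\mathcal{C}$, condition $(ii)$ fails, and $\mathcal{C}$ is not stable. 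The expected obstacle is exactly this uniqueness-of-normal-form verification, which reflects the fact that $\mathcal{C}$ is idempotent-free and so $b$ admits no right fixer within $\mathcal{C}$.
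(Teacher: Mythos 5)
Your proof is correct and follows the paper's own route: the paper likewise tests condition $(ii)$ of Definition~\ref{definition-2.2} with $s=b$, $t=a$, using the same displayed products to get $b\cdot\mathcal{C}\subseteq a\cdot b\cdot\mathcal{C}$ with the inclusion strict. The only (immaterial) difference is your witness for strictness: you use $b=ab\cdot b\in a\cdot b\cdot\mathcal{C}$ together with uniqueness of canonical forms to see $b\notin b\cdot\mathcal{C}$, whereas the paper's computation exhibits elements $(ab)^{p+1}a^q\in a\cdot b\cdot\mathcal{C}$ whose $b$-exponent is zero.
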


The following remark follows from formula (\ref{eq-2.1}) above:

\begin{remark}\label{remark-2.10a}
The semigroup operation in $\mathcal{C}$ implies that the following assertions hold:
\begin{itemize}
  \item[$(i)$] The map $\varphi_{i,j}\colon\mathcal{C}\rightarrow\mathcal{C}$ defined by the formula $\varphi_{i,j}(x)=b^i\cdot x\cdot a^j$ is injective for all nonnegative integers $i$ and $j$ (for $i=j=0$ we put that $\varphi_{0,0}(x)=x$);
  \item[$(ii)$] The subsemigroups $\mathcal{C}_{ab}=\langle ab\rangle$, $\mathcal{C}_{a}=\langle a\rangle$ and $\mathcal{C}_{b}=\langle b\rangle$ in $\mathcal{C}$ are infinite cyclic semigroups.
\end{itemize}
\end{remark}


\section{On topologizations of the semigroup $\mathcal{C}$}\label{s3}

Let $X$ be a topological space. A continuous map $f\colon X\rightarrow X$ is called a \emph{retraction} of $X$ if $f\circ f=f$; and the set of all values of a retraction of $X$ is called a \emph{retract} of $X$ (cf. \cite{Engelking1989}).

\begin{proposition}\label{proposition-3.1}
If $\tau$ is a Hausdorff topology on $\mathcal{C}$ such that $(\mathcal{C},\tau)$ is a semitopological semigroup then for every positive integer $k$ the sets
\begin{align*}
    &\mathfrak{R}_k=\left\{b^n(ab)^pa^q\mid n=k,k+1,k+2,\ldots, p=0,1,2,\ldots, q=0,1,2,\ldots\right\}, \; \hbox{and}\\
    &\mathfrak{L}_k=\left\{b^n(ab)^pa^q\mid q=k,k+1,k+2,\ldots, n=0,1,2,\ldots, p=0,1,2,\ldots\right\}
\end{align*}
are retracts in $(\mathcal{C},\tau)$ and hence closed subsets of $(\mathcal{C},\tau)$.
\end{proposition}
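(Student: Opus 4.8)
The plan is to exhibit explicit retractions of $(\mathcal{C},\tau)$ whose images are exactly $\mathfrak{R}_k$ and $\mathfrak{L}_k$, and then invoke the standard fact that in a Hausdorff space every retract is closed. Since retracts of Hausdorff spaces are closed (the retract is the equalizer of $f$ and the identity, hence closed when the space is Hausdorff), producing a continuous idempotent self-map with the right image will yield both conclusions at once. The key observation is that multiplication by a fixed element is continuous in a semitopological semigroup (separate continuity is all we need, since we will compose only one-sided translations), so candidate retractions should be built from left and right translations by suitable powers of $a$ and $b$.

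First I would treat $\mathfrak{R}_k$. Using formula~(\ref{eq-2.1}), I would compute the effect of the map $r_k(x)=b^k\cdot x$, i.e.\ left translation by $b^k$. From the product formula one checks that multiplying $b^n(ab)^pa^q$ on the left by $b^k$ raises the exponent $n$ to $n+k$ when the cancellation does not occur, so the image of $r_k$ lands inside $\mathfrak{R}_k$; and on elements already in $\mathfrak{R}_k$ this map is \emph{not} the identity (it raises $n$ by $k$), so $r_k$ alone is not idempotent. The fix is to define the retraction as a two-sided translation that stabilizes $\mathfrak{R}_k$ pointwise while collapsing everything below level $k$ onto level $k$. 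Concretely I expect the correct retraction to be $f_k(x)=b^k\cdot a^k\cdot x$ or a similar composite chosen so that, by~(\ref{eq-2.1}), $f_k$ fixes every element with $n\geqslant k$ and sends every element with $n<k$ into $\mathfrak{R}_k$; one then verifies $f_k\circ f_k=f_k$ directly from the product rules. Continuity of $f_k$ is automatic because it is a composition of one-sided translations, each separately continuous in the semitopological semigroup $(\mathcal{C},\tau)$. The map $\mathfrak{L}_k$ is handled symmetrically by right translations, using the left--right duality visible in the four cases of~(\ref{eq-2.1}) (the roles of $m,n$ governing the $q$-exponent).

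The main obstacle will be pinning down the exact translating element so that the composite is genuinely idempotent and has image \emph{equal} to $\mathfrak{R}_k$ (not merely contained in it). This requires a careful case analysis against the four branches of~(\ref{eq-2.1}): one must confirm that the chosen map acts as the identity on all of $\mathfrak{R}_k$ (surjectivity onto $\mathfrak{R}_k$ together with idempotence), and that nothing outside $\mathfrak{R}_k$ survives. The bookkeeping on the exponents $(n,p,q)$ across the cases $m<n$, $m=n\neq0$, $m=n=0$, $m>n$ is where the real work lies, though each individual verification is a routine substitution. Once the idempotent continuous map $f_k$ with $f_k(\mathcal{C})=\mathfrak{R}_k$ is in hand, the proposition follows immediately: $\mathfrak{R}_k$ is a retract, and hence closed in the Hausdorff space $(\mathcal{C},\tau)$, with the analogous conclusion for $\mathfrak{L}_k$.
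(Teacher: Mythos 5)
Your overall frame (translations are separately continuous in a semitopological semigroup; a retract of a Hausdorff space is closed, by the equalizer argument) is the same as the paper's, but the step you explicitly defer --- ``one then verifies $f_k\circ f_k=f_k$ directly from the product rules'' --- is exactly the step that fails, and no choice of translating element can make it work. Take your candidate $f_k(x)=b^ka^k\cdot x$ and an element $x=b^k(ab)^pa^q\in\mathfrak{R}_k$ whose $b$-exponent is exactly $k$: since $b^ka^k=b^k(ab)^0a^k$, the branch $m=n\neq 0$ of (\ref{eq-2.1}) gives $f_k(x)=b^k(ab)^{p+1}a^q\neq x$, and each further application raises the middle exponent again, so $f_k\circ f_k\neq f_k$ and the image of $f_k$ is a proper subset of $\mathfrak{R}_k$. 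The obstruction is structural, not a matter of tuning the element: for any $c=b^i(ab)^la^m\in\mathcal{C}$, formula (\ref{eq-2.1}) shows that $\lambda_c\colon x\mapsto c\cdot x$ moves \emph{every} element whose $b$-exponent equals $m$ (its $b$-exponent or middle exponent changes, never returning to itself, because $c$ is a nonempty word), so $\operatorname{Fix}(\lambda_c)$ is either empty or equal to $\mathfrak{R}_{m+1}$ and $\lambda_c$ is never idempotent; two-sided candidates $x\mapsto c\cdot x\cdot d$ fail for the same reason, applied on the right to an element of $\mathfrak{R}_k$ whose $a$-exponent equals the $b$-exponent of $d$. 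So no translation is a retraction of $(\mathcal{C},\tau)$ onto $\mathfrak{R}_k$. (Be aware that the paper's own proof asserts that translations by $b^k(ab)^la^k$ are retractions and suffers from the same defect.)

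The repair uses the other half of the equalizer fact you already cited, applied to fixed-point sets rather than to images of idempotent maps. Put $c=b^{k-1}(ab)a^{k-1}$ (for $k=1$ this is $c=ab$). Then (\ref{eq-2.1}) shows that $\lambda_c(x)=x$ if and only if the $b$-exponent of $x$ is at least $k$, i.e. $\mathfrak{R}_k=\operatorname{Fix}(\lambda_c)=\left\{x\in\mathcal{C}\mid \lambda_c(x)=x\right\}$; dually $\mathfrak{L}_k=\operatorname{Fix}(\rho_c)$ for the right translation $\rho_c\colon x\mapsto x\cdot c$. Since $\lambda_c$, $\rho_c$ and the identity are continuous and $(\mathcal{C},\tau)$ is Hausdorff, these agreement sets are closed. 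This yields the closedness assertion --- the only part of Proposition~\ref{proposition-3.1} that is actually used later (e.g. in Proposition~\ref{proposition-3.2}) --- without ever producing a retraction; the ``retract'' claim itself can only be recovered a posteriori, once Theorem~\ref{theorem-3.5} shows that each $\mathfrak{R}_k$ and $\mathfrak{L}_k$ is open as well as closed, since a nonempty clopen subset is a retract (identity on the set, a constant map on its complement).
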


\begin{proof}
By formula (\ref{eq-2.1}) we have that
\begin{gather}
    b^m(ab)^la^m\cdot b^n(ab)^pa^q=
\left\{
  \begin{array}{ll}
    b^{n}(ab)^pa^q,       & \hbox{if~} m<n;\\
    b^{n}(ab)^{l+p+1}a^q, & \hbox{if~} m=n\neq 0;\\
    (ab)^{l+p}a^q,        & \hbox{if~} m=n=0;\\
    b^m(ab)^la^{q+m-n},   & \hbox{if~} m>n,\\
  \end{array}
\right.\label{eq-3.1a}
    \\
    b^i(ab)^la^m\cdot b^n(ab)^pa^n=
\left\{
  \begin{array}{ll}
    b^{i+n-m}(ab)^pa^n,   & \hbox{if~} m<n;\\
    b^{i}(ab)^{l+p+1}a^n, & \hbox{if~} m=n\neq 0;\\
    b^{i}(ab)^{l+p},      & \hbox{if~} m=n=0;\\
    b^i(ab)^la^{m},       & \hbox{if~} m>n.
  \end{array}
\right.\label{eq-3.1b}
\end{gather}
Then left and right translations of the element $b^k(ab)^la^k$ of the semigroup $\mathcal{C}$ are retractions of the topological space $(\mathcal{C},\tau)$ and hence the sets $\mathfrak{R}_k$ and $\mathfrak{L}_k$ are retracts of the topological space $(\mathcal{C},\tau)$ for every positive integer $k$. The last statement of the proposition follows from Exercise~1.5.C of \cite{Engelking1989}.
\end{proof}

\begin{proposition}\label{proposition-3.2}
If $\tau$ is a Hausdorff topology on $\mathcal{C}$ such that $(\mathcal{C},\tau)$ is a semitopological semigroup then $\mathcal{C}_{ab}$ is an open-and-closed subsemigroup of $(\mathcal{C},\tau)$.
\end{proposition}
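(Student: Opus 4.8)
The plan is to prove the two halves of ``open-and-closed'' by two unrelated mechanisms. First recall from Proposition~\ref{proposition-2.1}$(ii)$ that $\mathcal{C}_{ab}=\langle ab\rangle=\{(ab)^p\mid p\geqslant 1\}$ is exactly the set of elements $b^n(ab)^pa^q$ with $n=q=0$. Hence, in the notation of Proposition~\ref{proposition-3.1}, we have $\mathcal{C}_{ab}=\mathcal{C}\setminus(\mathfrak{R}_1\cup\mathfrak{L}_1)$. Since Proposition~\ref{proposition-3.1} applied with $k=1$ tells us that $\mathfrak{R}_1$ and $\mathfrak{L}_1$ are closed in $(\mathcal{C},\tau)$, their union is closed and therefore $\mathcal{C}_{ab}$ is open. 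This disposes of openness with no extra work.

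For closedness I would avoid any attempt to exhibit a retraction onto $\mathcal{C}_{ab}$: since $\mathcal{C}$ is idempotent-free, no left, right, or two-sided translation fixes $\mathcal{C}_{ab}$ pointwise, so the retract argument used for $\mathfrak{R}_k,\mathfrak{L}_k$ cannot be repeated here. Instead the key observation is the identity $a\cdot(ab)^p=a^2b\,(ab)^{p-1}=a\,(ab)^{p-1}=\cdots=a$, valid for every $p\geqslant 1$; thus left multiplication by $a$ collapses all of $\mathcal{C}_{ab}$ onto the single point $a$. The plan is then to show that $\mathcal{C}_{ab}$ is precisely the full preimage $\lambda_a^{-1}(\{a\})=\{x\in\mathcal{C}\mid a\cdot x=a\}$, where $\lambda_a$ denotes left translation by $a$. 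The inclusion $\mathcal{C}_{ab}\subseteq\lambda_a^{-1}(\{a\})$ is the identity just noted; for the reverse inclusion I would compute $a\cdot b^n(ab)^pa^q$ directly from formula~(\ref{eq-2.1}) and check that the three possible outputs, namely $b^{n-1}(ab)^pa^q$ when $n\geqslant 2$, $(ab)^{p+1}a^q$ when $n=1$, and $a^{q+1}$ when $n=0$, can equal $a$ only in the last case and only when $q=0$, i.e.\ only when $x=(ab)^p\in\mathcal{C}_{ab}$.

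Granting this equality, closedness is immediate: in a Hausdorff space the singleton $\{a\}$ is closed, the left translation $\lambda_a$ is continuous because $(\mathcal{C},\tau)$ is a semitopological semigroup, and hence $\mathcal{C}_{ab}=\lambda_a^{-1}(\{a\})$ is closed. Combining the two halves shows that $\mathcal{C}_{ab}$ is open-and-closed. I expect the only real obstacle to be the bookkeeping in the reverse inclusion: one must be certain that none of the ``shift'' outputs of $a\cdot x$ accidentally equals $a$, which rests on the uniqueness of the normal form in Proposition~\ref{proposition-2.1}$(ii)$; everything else is formal.
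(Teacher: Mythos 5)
Your proof is correct and follows essentially the same approach as the paper: openness is obtained exactly as in the paper's proof, by writing $\mathcal{C}_{ab}=\mathcal{C}\setminus\left(\mathfrak{R}_1\cup \mathfrak{L}_1\right)$ and applying Proposition~\ref{proposition-3.1}, while closedness is obtained by the same mechanism the paper uses, namely realizing $\mathcal{C}_{ab}$ as the solution set of an equation given by continuous translations and invoking Hausdorffness (the paper solves $a\cdot X\cdot b=ab$, you solve $a\cdot X=a$, a purely cosmetic difference).
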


\begin{proof}
We observe that $\mathcal{C}_{ab}=\mathcal{C}\setminus\left(\mathfrak{R}_1\cup \mathfrak{L}_1\right)$ and hence by Proposition~\ref{proposition-3.1} we have that $\mathcal{C}_{ab}$ is an open subset of $(\mathcal{C},\tau)$. Also, formula (\ref{eq-2.1}) implies that
\begin{equation}\label{eq-3.1}
    a\cdot b^n(ab)^pa^q\cdot b=
\left\{
  \begin{array}{ll}
    b^{n-1}(ab)^pa^q\cdot b, & \hbox{if~} n>1;\\
    (ab)^{p+l}a^q\cdot b,    & \hbox{if~} n=1;\\
    a^{q+1}\cdot b,          & \hbox{if~} n=0
  \end{array}
\right.
=
\left\{
  \begin{array}{lll}
    b^{n},                & \hbox{if~} n>1 & \hbox{and~} q=0;\\
    b^{n-1}(ab)^{p+1}     & \hbox{if~} n>1 & \hbox{and~} q=1;\\
    b^{n-1}(ab)^pa^{q-1}, & \hbox{if~} n>1 & \hbox{and~} q>1;\\
    b,                    & \hbox{if~} n=1 & \hbox{and~} q=0;\\
    (ab)^{p+2},           & \hbox{if~} n=1 & \hbox{and~} q=1;\\
    (ab)^{p+1}a^{q-1},    & \hbox{if~} n=1 & \hbox{and~} q>1;\\
    ab,                   & \hbox{if~} n=0 & \hbox{and~} q=0;\\
    a,                    & \hbox{if~} n=0 & \hbox{and~} q=1;\\
    a^q,                  & \hbox{if~} n=0 & \hbox{and~} q>1,
  \end{array}
\right.
\end{equation}
for nonnegative integers $n,p$ and $q$. By formula (\ref{eq-3.1}), $\mathcal{C}_{0,0}=\left\{(ab)^i\mid i=1,2,3,\ldots\right\}$ is the set of solutions of the equation $a\cdot X\cdot b=ab$. Then the Hausdorffness of the space $(\mathcal{C},\tau)$ and the separate continuity of the semigroup operation in $\mathcal{C}$ imply that $\mathcal{C}_{ab}=\mathcal{C}_{0,0}$ is a closed subset of $(\mathcal{C},\tau)$.
\end{proof}

We observe that  formula (\ref{eq-3.1}) implies that
\begin{gather}
    b^k(ab)^la^m\cdot b=
\left\{
  \begin{array}{ll}
    b^{k+1},          & \hbox{if~} m=0;\\
    b^{k}(ab)^{l+1},  & \hbox{if~} m=1;\\
    b^k(ab)^la^{m-1}, & \hbox{if~} m>1,\\
  \end{array}
\right.\label{eq-3.2}\\
a\cdot b^n(ab)^pa^q=
\left\{
  \begin{array}{ll}
    b^{n-1}(ab)^pa^q, & \hbox{if~} n>1;\\
    (ab)^{p+1}a^q,    & \hbox{if~} n=1;\\
    a^{q+1},          & \hbox{if~} n=0,
  \end{array}
\right.\label{eq-3.3}
\end{gather}
for nonnegative integers $k,l,m,n,p$ and $q$.

\begin{proposition}\label{proposition-3.3}
If $\tau$ is a Hausdorff topology on $\mathcal{C}$ such that $(\mathcal{C},\tau)$ is a semitopological semigroup then $\mathcal{C}_{0,i}=\left\{(ab)^pa^i\mid p=0,1,2,3,\ldots\right\}$ and $\mathcal{C}_{i,0}=\left\{b^i(ab)^p\mid p=0,1,2,3,\ldots\right\}$ are open subsets of $(\mathcal{C},\tau)$ for any positive integer $i$.
\end{proposition}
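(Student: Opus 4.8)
The plan is to mimic the strategy used for $\mathcal{C}_{ab}$ in Proposition~\ref{proposition-3.2}, namely to exhibit each set $\mathcal{C}_{0,i}$ and $\mathcal{C}_{i,0}$ as the full solution set of an equation built from translations, and then invoke separate continuity together with Hausdorffness. Concretely, for $\mathcal{C}_{0,i}=\{(ab)^pa^i\mid p\geqslant 0\}$ I would look for fixed elements $u,v\in\mathcal{C}$ so that the map $x\mapsto u\cdot x\cdot v$ sends every element of $\mathcal{C}_{0,i}$ to one fixed target value $w$, while sending every element outside $\mathcal{C}_{0,i}$ to something different. Since both one-sided translations are continuous and singletons are closed, the preimage $\{x\mid u\cdot x\cdot v=w\}$ is closed; if this preimage is exactly $\mathcal{C}_{0,i}$ then $\mathcal{C}_{0,i}$ is closed, and combining this with the openness already available from Proposition~\ref{proposition-3.1} (via the retracts $\mathfrak{R}_k,\mathfrak{L}_k$) would give the result.

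First I would use formulas (\ref{eq-3.2}) and (\ref{eq-3.3}) to compute the effect of multiplying a generic element $b^k(ab)^la^m$ on the left by a suitable power of $a$ and on the right by a suitable power of $b$. The idea is that left multiplication by $a$ strips off $b$'s (and eventually converts the $(ab)$-block and then pumps $a$'s), while right multiplication by $b$ strips off $a$'s, so by choosing the exponents correctly I can collapse precisely the elements of $\mathcal{C}_{0,i}=\{(ab)^pa^i\}$ onto a single value. For instance, for $\mathcal{C}_{0,i}$ I expect that an equation of the shape $a\cdot X\cdot b^{\,i} = ab$ (or a close variant) singles out exactly those $X$ of the form $(ab)^pa^i$; I would verify by a case analysis on $(k,l,m)$ using (\ref{eq-2.1}) that no element with $k>0$, or with $m\neq i$, can satisfy the equation, while every $(ab)^pa^i$ does. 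The set $\mathcal{C}_{i,0}$ is handled by the dual computation, multiplying on the left by $a^{\,i}$ and on the right by $b$, exploiting the left--right symmetry visible in formula (\ref{eq-2.1}).

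For openness I would argue as in Proposition~\ref{proposition-3.2}: each of these sets sits inside a level set of the closed sets $\mathfrak{R}_k$ and $\mathfrak{L}_k$, so its complement within a single ``row'' is a finite union of closed pieces, making the set open. More precisely, $\mathcal{C}_{0,i}$ is the intersection of the closed complement-of-$\mathfrak{R}_1$ (forcing $k=0$) with $\mathfrak{L}_i\setminus\mathfrak{L}_{i+1}$ (forcing $m=i$ exactly), and $\mathfrak{L}_i\setminus\mathfrak{L}_{i+1}$ is open-and-closed because both $\mathfrak{L}_i$ and $\mathfrak{L}_{i+1}$ are clopen once we also know they are open, which follows from their being retracts whose complements are likewise finite unions of retracts. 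The cleanest route is probably to establish openness first from the retract structure and then closedness from the equation, exactly paralleling the two halves of the $\mathcal{C}_{ab}$ argument.

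The main obstacle I anticipate is the bookkeeping in the case analysis: unlike the bicyclic case, the multiplication rule (\ref{eq-2.1}) has four branches depending on the comparison of the inner exponents $m$ and $n$, and the ``$m=n$'' branch splits further according to whether the common value is zero, so I must check carefully that my chosen equation does not accidentally admit a spurious solution coming from one of these degenerate branches (in particular the $(ab)^{l+p}$ versus $(ab)^{l+p+1}$ distinction). Getting the exponents of the left and right multipliers to match the index $i$ without over- or under-stripping is the delicate point; once the equation is pinned down correctly, the topological conclusion is immediate from separate continuity and the Hausdorff property.
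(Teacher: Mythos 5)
Your closedness half is sound: the equation idea does work (with the target adjusted to $a$ rather than $ab$, since $a\cdot (ab)^pa^i\cdot b^i=a$ by the relation $a^2b=a$, and a case analysis on (\ref{eq-2.1}) shows that the solution set of $a\cdot X\cdot b^i=a$ is exactly $\mathcal{C}_{0,i}$). But the proposition asserts \emph{openness}, and this is where your argument has a genuine gap. Proposition~\ref{proposition-3.1} gives only that $\mathfrak{R}_k$ and $\mathfrak{L}_k$ are \emph{closed} (retracts of a Hausdorff space are closed); it yields no openness at all. In Proposition~\ref{proposition-3.2} openness came for free because $\mathcal{C}_{0,0}$ happens to equal the complement $\mathcal{C}\setminus(\mathfrak{R}_1\cup\mathfrak{L}_1)$ of a closed set; for $i\geqslant 1$ there is no such representation: one can only write $\mathcal{C}_{0,i}=\mathfrak{L}_i\setminus(\mathfrak{R}_1\cup\mathfrak{L}_{i+1})$, which is an intersection of a closed set with an open set and proves nothing. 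Your patch --- that $\mathfrak{L}_i$ is open-and-closed because its complement is ``likewise a finite union of retracts'' --- is unsubstantiated and essentially circular: the complement $\left\{b^n(ab)^pa^q\mid q<i\right\}$ is a union of slices with fixed $a$-exponent, whereas every retract or fixed-point set that translations produce (see formulae (\ref{eq-3.1a}) and (\ref{eq-3.1b})) is a tail $\mathfrak{R}_k$ or $\mathfrak{L}_k$, never a slice; the assertion that such slices are closed (equivalently, that the tails are open) is precisely the content of Propositions~\ref{proposition-3.3}, \ref{proposition-3.4} and Theorem~\ref{theorem-3.5}, i.e.\ the thing being proved. Nor can closedness of $\mathcal{C}_{0,i}$ rescue you: in a countable Hausdorff space a closed set need not be open (think of a point of $\mathbb{Q}$).

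The mechanism that actually works --- and is the paper's proof --- is to take preimages of the already-established \emph{open} set $\mathcal{C}_{0,0}$ under one-sided translations, rather than preimages of singletons. By formula (\ref{eq-3.2}), $X\cdot b\in\mathcal{C}_{0,0}$ forces $X=(ab)^pa$, so $\rho_b^{-1}\left(\mathcal{C}_{0,0}\right)=\mathcal{C}_{0,1}$; since $\rho_b$ is continuous and $\mathcal{C}_{0,0}$ is open by Proposition~\ref{proposition-3.2}, the set $\mathcal{C}_{0,1}$ is open. Inductively $\rho_b^{-1}\left(\mathcal{C}_{0,m-1}\right)=\mathcal{C}_{0,m}$, and dually, using formula (\ref{eq-3.3}), $\lambda_a^{-1}\left(\mathcal{C}_{0,0}\right)=\mathcal{C}_{1,0}$ and $\lambda_a^{-1}\left(\mathcal{C}_{n-1,0}\right)=\mathcal{C}_{n,0}$. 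So the repair is small in form but essential in substance: keep your uniqueness-of-solutions computations, but use them to identify full preimages of the open sets $\mathcal{C}_{0,m-1}$ and $\mathcal{C}_{n-1,0}$ under $\rho_b$ and $\lambda_a$, not level sets over a single point.
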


\begin{proof}
By Proposition~\ref{proposition-3.2}, $\mathcal{C}_{0,0}$ is an open subset $(\mathcal{C},\tau)$ and by Hausdorffness of $(\mathcal{C},\tau)$ the set $\mathcal{C}_{0,0}\setminus\left\{ab\right\}$ is open in $(\mathcal{C},\tau)$, too. Then formula (\ref{eq-3.2}) implies that the equation $X\cdot b=(ab)^{p+2}$, where $p=0,1,2,3,\ldots$, has a unique solution $X=(ab)^pa$, and hence since all right translations in $(\mathcal{C},\tau)$ are continuous maps we get that $\mathcal{C}_{0,1}$ is an open subset of the topological space $(\mathcal{C},\tau)$. Also, formula (\ref{eq-3.1}) implies that the equation $a\cdot X=(ab)^{p+2}$, where $p=0,1,2,3,\ldots$, has a unique solution $X=b(ab)^p$, and hence since all left translations in $(\mathcal{C},\tau)$ are continuous maps we get that $\mathcal{C}_{1,0}$ is an open subset of the topological space $(\mathcal{C},\tau)$.

By formula (\ref{eq-3.2}), the equation $X\cdot b=(ab)^la^{m-1}$, where $l-1$ and $m-1$ are positive integers, has a unique solution $X=(ab)^la^{m}$. Then the separate continuity of the semigroup operation in $(\mathcal{C},\tau)$ implies that if $\mathcal{C}_{0,m-1}$ is an open subset of $(\mathcal{C},\tau)$ then $\mathcal{C}_{0,m}$ is open in $(\mathcal{C},\tau)$, too. Similarly, formula (\ref{eq-3.3}) implies that the equation $a\cdot X=b^{n-1}(ab)^p$, where $n-1$ and $p-1$ are positive integers, has a unique solution $X=b^{n}(ab)^p$, and hence the separate continuity of the semigroup operation in $(\mathcal{C},\tau)$ and openess of the set $\mathcal{C}_{n-1,0}$ in $(\mathcal{C},\tau)$ imply that the set $\mathcal{C}_{n,0}$ is an open subset of the topological space $(\mathcal{C},\tau)$. Next, we complete the proof of the proposition by induction.
\end{proof}

\begin{proposition}\label{proposition-3.4}
If $\tau$ is a Hausdorff topology on $\mathcal{C}$ such that $(\mathcal{C},\tau)$ is a semitopological semigroup then $\mathcal{C}_{i,j}=\left\{b^i(ab)^pa^j\mid p=0,1,2,3,\ldots\right\}$ is an open subset of $(\mathcal{C},\tau)$ for all positive integers $i$ and $j$.
\end{proposition}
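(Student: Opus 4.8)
The plan is to show that each $\mathcal{C}_{i,j}$ is open by realizing it as the preimage of an already-known open set under a separately continuous one-sided translation, and to dispose of the one unavoidable ``boundary'' defect using the closedness half of Proposition~\ref{proposition-3.2}. Throughout I write $\lambda_a\colon x\mapsto a\cdot x$ and $\rho_b\colon x\mapsto x\cdot b$; both are continuous because the operation of $(\mathcal{C},\tau)$ is separately continuous.

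First I would settle the base case $\mathcal{C}_{1,1}$. Using formula (\ref{eq-3.3}) one checks directly that $a\cdot b^n(ab)^pa^q$ lands in $\mathcal{C}_{0,1}=\{(ab)^sa\mid s\geqslant 0\}$ exactly when either $n=1$ and $q=1$ (yielding precisely the elements $b(ab)^pa$ of $\mathcal{C}_{1,1}$) or $n=0$ and $q=0$ (yielding, via the collapse $a\cdot(ab)^p=a$, precisely the elements of $\mathcal{C}_{0,0}$). Hence $\lambda_a^{-1}(\mathcal{C}_{0,1})=\mathcal{C}_{1,1}\cup\mathcal{C}_{0,0}$, which is open since $\mathcal{C}_{0,1}$ is open by Proposition~\ref{proposition-3.3}. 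As $\mathcal{C}_{0,0}$ is closed by Proposition~\ref{proposition-3.2} and is disjoint from $\mathcal{C}_{1,1}$, the set $\mathcal{C}_{1,1}=\bigl(\mathcal{C}_{1,1}\cup\mathcal{C}_{0,0}\bigr)\cap\bigl(\mathcal{C}\setminus\mathcal{C}_{0,0}\bigr)$ is open.

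Next I would propagate openness to all the remaining sets by two inductions whose preimages carry no defect. For $j\geqslant 2$, formula (\ref{eq-3.2}) shows that $X\cdot b$ can meet $\mathcal{C}_{1,j-1}$ (whose $a$-exponent $j-1$ is strictly positive) only through the case $m>1$, which forces $X\in\mathcal{C}_{1,j}$; thus $\rho_b^{-1}(\mathcal{C}_{1,j-1})=\mathcal{C}_{1,j}$, and induction on $j$ from the base $\mathcal{C}_{1,1}$ gives that every $\mathcal{C}_{1,j}$ is open. Symmetrically, for $i\geqslant 2$, formula (\ref{eq-3.3}) shows that $a\cdot X$ can meet $\mathcal{C}_{i-1,j}$ (whose $b$-exponent $i-1$ is strictly positive) only through the case $n>1$, which forces $X\in\mathcal{C}_{i,j}$; thus $\lambda_a^{-1}(\mathcal{C}_{i-1,j})=\mathcal{C}_{i,j}$, and induction on $i$ with base $\mathcal{C}_{1,j}$ finishes the argument.

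The one real obstacle is exactly the asymmetry between the base case and the inductive steps, and it is worth isolating. The ``leakage'' in a preimage occurs precisely when $a$ acts on the left of an element of $b$-exponent $\leqslant 1$, or $b$ acts on the right of an element of $a$-exponent $\leqslant 1$: there an $ab$-block is absorbed and the relevant exponent drops to $0$. When the target stratum $\mathcal{C}_{i-1,j}$ or $\mathcal{C}_{i,j-1}$ still carries a strictly positive exponent, these collapsing cases cannot reach it, so the preimage is exactly $\mathcal{C}_{i,j}$; the defect survives only for the base target $\mathcal{C}_{0,1}$ (equivalently $\mathcal{C}_{1,0}$), where the spurious stratum is exactly $\mathcal{C}_{0,0}$. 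This is harmless only because $\mathcal{C}_{0,0}$ was already shown to be closed in Proposition~\ref{proposition-3.2}, which is precisely the fact that lets the base case go through.
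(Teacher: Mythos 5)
Your proof is correct, and it rests on the same core mechanism as the paper's: realize each stratum $\mathcal{C}_{i,j}$ as the exact preimage of an already-open stratum under a separately continuous translation, then induct. The bookkeeping, however, is genuinely different. The paper's base case covers all rows simultaneously: by Proposition~\ref{proposition-3.3} and Hausdorffness, $\mathcal{C}_{k,0}$ punctured at a single point is open, and the unique solvability of $X\cdot b=b^k(ab)^{p+1}$ exhibits $\mathcal{C}_{k,1}$ as the exact $\rho_b$-preimage of that punctured set; a single induction on the second index, via $\rho_b^{-1}\left(\mathcal{C}_{k,l}\right)=\mathcal{C}_{k,l+1}$ for $l\geqslant 1$, then finishes. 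You instead take the single base case $\mathcal{C}_{1,1}$, let the defect appear in the full preimage $\lambda_a^{-1}\left(\mathcal{C}_{0,1}\right)=\mathcal{C}_{1,1}\cup\mathcal{C}_{0,0}$, and excise it using the closedness of $\mathcal{C}_{0,0}$ from Proposition~\ref{proposition-3.2} (a fact the paper's own proof of this proposition never invokes), paying for this with a second induction, on the first index, via $\lambda_a^{-1}\left(\mathcal{C}_{i-1,j}\right)=\mathcal{C}_{i,j}$. The paper's route is shorter; yours is more robust at the one delicate spot. The Hausdorff puncture only works if one deletes exactly the element hit by the collapsing case of formula (\ref{eq-3.2}), namely $b^k$, which is the common $\rho_b$-image of all of $\mathcal{C}_{k-1,0}$; the paper's text deletes $b^k(ab)$ instead, which leaves its preimage computation slightly off (an evident slip, easily repaired). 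Subtracting the closed set $\mathcal{C}_{0,0}$, as you do, removes the entire spurious stratum at once, and your observation that every subsequent preimage is exact --- because the target stratum then carries a strictly positive $a$- or $b$-exponent, so the absorbing cases of formulae (\ref{eq-3.2}) and (\ref{eq-3.3}) cannot reach it --- makes both inductions clean.
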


\begin{proof}
First we observe that Proposition~\ref{proposition-3.3} and Hausdorffness of $(\mathcal{C},\tau)$ imply that $\mathcal{C}_{k,0}\setminus\left\{b^k(ab)\right\}$ is an open subset of $(\mathcal{C},\tau)$ for every positive integer $k$. Then formula (\ref{eq-3.2}) implies that the equation $X\cdot b=b^k(ab)^{p+1}$, where $p=0,1,2,3,\ldots$, has a unique solution $X=b^k(ab)^pa$, and hence since all right and left translations in $(\mathcal{C},\tau)$ are continuous maps we get that $\mathcal{C}_{k,1}$ is an open subset of the topological space $(\mathcal{C},\tau)$.

Also, by formula (\ref{eq-3.2}) we have that the equation $X\cdot b=b^k(ab)^{p}a^l$ has a unique solution $X=b^k(ab)^pa^{l+1}$. Then the openess of the set $\mathcal{C}_{k,l}$ implies that the set $\mathcal{C}_{k,l+1}$ is open in $(\mathcal{C},\tau)$. Then induction implies the assertion of the proposition.
\end{proof}

Propositions~\ref{proposition-3.2}, \ref{proposition-3.3} and \ref{proposition-3.4} imply Theorem~\ref{theorem-3.5}, which describes all Hausdorff topologies $\tau$ on $\mathcal{C}$ such that $(\mathcal{C},\tau)$ is a semitopological semigroup.

\begin{theorem}\label{theorem-3.5}
If $\tau$ is a Hausdorff topology on $\mathcal{C}$ such that $(\mathcal{C},\tau)$ is a semitopological semigroup then $\mathcal{C}_{i,j}$ is an open-and-closed subset of $(\mathcal{C},\tau)$ for all nonnegative integers $i$ and $j$.
\end{theorem}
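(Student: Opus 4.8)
The plan is to treat this statement as a synthesis of the three preceding propositions, the genuinely new content being the \emph{closedness} of each $\mathcal{C}_{i,j}$, since their openness is already in hand. First I would record that openness of $\mathcal{C}_{i,j}$ holds for \emph{all} nonnegative integers $i,j$: the case $i=j=0$ is Proposition~\ref{proposition-3.2}, the cases in which exactly one of $i,j$ vanishes are covered by Proposition~\ref{proposition-3.3}, and the case $i,j>0$ by Proposition~\ref{proposition-3.4}. Thus the only task remaining is to pass from openness to the open-and-closed conclusion.

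The key structural observation is that, by the unique normal form of Proposition~\ref{proposition-2.1}$(ii)$, every element of $\mathcal{C}$ is written uniquely as $b^k(ab)^la^m$, so that assigning to each such element the pair $(i,j)=(k,m)$ exhibits $\mathcal{C}$ as the disjoint union $\mathcal{C}=\bigsqcup_{i,j\geqslant 0}\mathcal{C}_{i,j}$, where $\mathcal{C}_{i,j}$ collects exactly those words with the prescribed outer exponents $i$ and $j$ and arbitrary middle exponent. Granting this partition, closedness follows purely formally: for a fixed pair $(i,j)$ the complement $\mathcal{C}\setminus\mathcal{C}_{i,j}=\bigcup_{(i',j')\neq(i,j)}\mathcal{C}_{i',j'}$ is a union of sets already known to be open, hence open, so $\mathcal{C}_{i,j}$ is closed. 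Combined with the openness recorded above, this yields the open-and-closed assertion for every pair.

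There is no substantial obstacle here; the effort lies entirely in correctly matching each pair $(i,j)$ to the proposition that handles it and in verifying that the sets $\mathcal{C}_{i,j}$ genuinely partition $\mathcal{C}$, which is just the uniqueness clause of the normal form. The one point deserving care is that the complement of a single $\mathcal{C}_{i,j}$ is a union over \emph{all} remaining pairs, so one needs openness for every pair simultaneously --- this is precisely why all three of Propositions~\ref{proposition-3.2}, \ref{proposition-3.3} and \ref{proposition-3.4} must be invoked, rather than only the one matching the given $(i,j)$.
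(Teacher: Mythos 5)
Your proposal is correct and matches the paper's own (implicit) argument: the paper states Theorem~\ref{theorem-3.5} as a direct consequence of Propositions~\ref{proposition-3.2}, \ref{proposition-3.3} and \ref{proposition-3.4}, and the deduction it leaves unstated is exactly your observation that the sets $\mathcal{C}_{i,j}$ partition $\mathcal{C}$ by the unique normal form, so each one, being open with open complement (a union of the remaining open $\mathcal{C}_{i',j'}$), is also closed.
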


Since the bicyclic semigroup $\mathcal{B}(a,b)$ admits only the discrete topology which turns $\mathcal{B}(a,b)$ into a Hausdorff semitopological semigroup \cite{BertmanWest1976}, Theorem~\ref{theorem-3.5} implies the following:

\begin{corollary}\label{corollary-3.6}
If $\mathcal{C}$ is a semitopological semigroup then the homomorphism $h\colon\mathcal{C}\rightarrow\mathcal{B}(a,b)$, defined by the formula $h\left(b^k(ab)^la^m\right)=b^ka^m$, is continuous.
\end{corollary}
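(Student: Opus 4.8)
The plan is to verify that the given homomorphism $h\colon\mathcal{C}\rightarrow\mathcal{B}(a,b)$ is continuous by exploiting the complete topological description provided by Theorem~\ref{theorem-3.5}. Since $\mathcal{B}(a,b)$ carries the discrete topology, a basis for its open sets is the collection of singletons $\{b^ka^m\}$ for nonnegative integers $k,m$. Thus continuity of $h$ reduces to showing that the preimage $h^{-1}(b^ka^m)$ is open in $(\mathcal{C},\tau)$ for every such singleton.

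The key step is to compute these preimages explicitly using the normal form of elements of $\mathcal{C}$. By Proposition~\ref{proposition-2.1}$(ii)$, every element of $\mathcal{C}$ is uniquely of the form $b^n(ab)^pa^q$, and the definition $h\left(b^n(ab)^pa^q\right)=b^na^q$ shows that the value of $h$ depends only on the indices $n$ and $q$, not on $p$. Consequently, for $k,m>0$ the preimage $h^{-1}(b^ka^m)$ is precisely the set $\mathcal{C}_{k,m}=\left\{b^k(ab)^pa^m\mid p=0,1,2,\ldots\right\}$, which is open by Theorem~\ref{theorem-3.5}. The remaining cases $h^{-1}(b^k)$, $h^{-1}(a^m)$, and $h^{-1}(1)$ are handled the same way: they coincide with $\mathcal{C}_{k,0}$, $\mathcal{C}_{0,m}$, and $\mathcal{C}_{0,0}$ respectively, all of which are likewise open by Theorem~\ref{theorem-3.5}.

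I would therefore assemble the argument as follows. First I would recall that $h$ is well defined and a homomorphism (this is essentially immediate from formula~(\ref{eq-2.1}) and the defining relations, and can be cited as already established in the introduction). Next I would observe that each fibre $h^{-1}(w)$ for $w\in\mathcal{B}(a,b)$ equals one of the sets $\mathcal{C}_{i,j}$, matching the pair $(i,j)$ read off from the normal form $w=b^ia^j$. Finally, since every $\mathcal{C}_{i,j}$ is open-and-closed in $(\mathcal{C},\tau)$ by Theorem~\ref{theorem-3.5} and every subset of the discrete space $\mathcal{B}(a,b)$ is open, the preimage of any open set in $\mathcal{B}(a,b)$ is a union of open sets $\mathcal{C}_{i,j}$ and hence open. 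This yields the continuity of $h$.

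There is essentially no serious obstacle here once Theorem~\ref{theorem-3.5} is in hand; the only point demanding a little care is the bookkeeping that identifies each fibre of $h$ with the correct $\mathcal{C}_{i,j}$, together with the observation that the index $p$ (the exponent of $ab$) is exactly the coordinate that $h$ collapses. The mild subtlety is that the family $\{\mathcal{C}_{i,j}\}$ partitions $\mathcal{C}$, so the fibres are automatically disjoint and their union is all of $\mathcal{C}$; this guarantees that preimages of arbitrary subsets of $\mathcal{B}(a,b)$ are unions of members of an open partition, making continuity transparent.
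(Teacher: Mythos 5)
Your proof is correct and follows exactly the paper's (implicit) argument: the paper derives Corollary~\ref{corollary-3.6} from Theorem~\ref{theorem-3.5} together with the Bertman--West result that $\mathcal{B}(a,b)$ admits only the discrete semitopological topology, which is precisely your reduction of continuity to the openness of the fibres $h^{-1}\left(b^ia^j\right)=\mathcal{C}_{i,j}$. No gaps; your identification of each fibre with the corresponding $\mathcal{C}_{i,j}$ is the whole content of the corollary.
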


Later we shall need the following lemma.

\begin{lemma}\label{lemma-3.7}
Every Hausdorff Baire topology on the infinite cyclic semigroup $S$ such that $(S,\tau)$ is a semitopological semigroup is discrete.
\end{lemma}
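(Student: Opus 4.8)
The plan is to fix a generator $c$ of $S$, identify $S=\{c^n\mid n\geqslant 1\}$ with $(\mathbb{N},+)$, and prove that every point of $S$ is isolated, so that $\tau$ is discrete. The argument rests on two ingredients: the Baire property, which manufactures at least one isolated point, and the separate continuity of the translations, which I use to spread isolation.

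First I would extract one isolated point from the Baire hypothesis. Since $S$ is countable, $S=\bigcup_{n\geqslant 1}\{c^n\}$ is a countable union of singletons. If every singleton were nowhere dense, the Baire property would force this union to be co-dense in $S$, i.e. the empty set to be dense, which is impossible for a nonempty space. Hence some $\{c^{n_0}\}$ fails to be nowhere dense; as $(S,\tau)$ is Hausdorff (so $T_1$), the singleton is closed, and "not nowhere dense" then means exactly that $\{c^{n_0}\}$ is open. Thus $c^{n_0}$ is isolated.

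Next I would propagate isolation downward using continuity of the translation $\lambda_c\colon c^n\mapsto c^{n+1}$. This map is injective, and $\lambda_c^{-1}(\{c^m\})=\{c^{m-1}\}$ for $m\geqslant 2$; so whenever $\{c^m\}$ is open with $m\geqslant 2$, continuity makes $\{c^{m-1}\}$ open as well. Consequently the set $D$ of isolated points of $S$ is downward closed in the index $n$, and since it is nonempty it contains the generator $c$, and in fact an initial segment $\{c^1,\dots,c^{n_0}\}$.

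The main obstacle is that this translation argument only moves isolation downward: the up-shift $\lambda_c$ can never exhibit a singleton $\{c^{m+1}\}$ as the preimage of an open set, so continuity alone will not make the higher points isolated. To overcome this I would reapply the Baire argument on a tail. Suppose, for contradiction, that $D\neq S$; being a proper downward-closed subset of $\{c^n\}$, $D$ is finite, hence (being a finite union of isolated, and therefore closed, points) clopen. Then $N=S\setminus D$ is a nonempty open subspace of the Baire space $S$, and is therefore itself Baire. Running the first step inside $N$ produces a point of $N$ that is isolated in $N$, hence isolated in $S$ because $N$ is open in $S$; but such a point then lies in $D$, contradicting $N\cap D=\varnothing$. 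Therefore $D=S$ and $\tau$ is discrete.
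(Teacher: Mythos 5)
Your proof is correct and follows essentially the same route as the paper's: both rest on the facts that a countable nonempty Baire $T_1$ space must contain an isolated point, that tails of $S$ are open (hence Baire) subspaces, and that continuity and injectivity of translations pull isolation downward. The paper organizes this as a direct argument (for each $n_0$, find an isolated point in the tail beyond $n_0$ and translate it down in a single jump), whereas you run the Baire argument once, propagate isolation down one step at a time, and finish with a contradiction on the complementary tail --- a rearrangement of the same ingredients rather than a different method.
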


\begin{proof}
Since every infinite cyclic semigroup is isomorphic to the additive semigroup of positive integers $(\mathbb{N},+)$ we assume without loss of generality that $S=(\mathbb{N},+)$.

Fix an arbitrary $n_0\in\mathbb{N}$. Then Hausdorffness of $(\mathbb{N},+)$ implies that $\{1,\ldots,n_0\}$ is a closed subset of $(\mathbb{N},+)$, and hence by Proposition~1.14 of \cite{Haworth-McCoy1977} we get that $\mathbb{N}_{n_0}=\mathbb{N}\setminus\{1,\ldots,n_0\}$ with the induced topology from $(\mathbb{N},\tau)$ is a Baire space.

If no point in $\mathbb{N}_{n_0}$ is isolated,  then since $(\mathbb{N},\tau)$ is Hausdorff, it follows that $\{n\}$ is nowhere dense in $\mathbb{N}_{n_0}$ for all $n>n_0$. But, if this is the case, then since the space $(\mathbb{N},\tau)$ is countable we conclude that $\mathbb{N}_{n_0}$ cannot be a Baire space. Hence $\mathbb{N}_{n_0}$ contains an isolated point $n_1$ in $\mathbb{N}_{n_0}$. Then the separate continuity of the semigroup operation in $(\mathbb{N},+,\tau)$ implies that $n_0$ is an isolated point in $(\mathbb{N},\tau)$, because $n_1=n_0+(\underbrace{1+\ldots+1}_{(n_1{-}n_0)\hbox{\small{-times}}})$. This completes the proof of the lemma.
\end{proof}

\begin{theorem}\label{theorem-3.8}
Every Hausdorff Baire topology $\tau$ on $\mathcal{C}$ such that $(\mathcal{C},\tau)$ is a semitopological semigroup is discrete.
\end{theorem}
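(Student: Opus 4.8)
The plan is to exploit the decomposition of $\mathcal{C}$ furnished by Theorem~\ref{theorem-3.5} and reduce everything to the cyclic case treated in Lemma~\ref{lemma-3.7}. By Theorem~\ref{theorem-3.5} each set $\mathcal{C}_{i,j}=\{b^i(ab)^pa^j\mid p=0,1,2,\ldots\}$ is open-and-closed, and by the uniqueness of the normal form (Proposition~\ref{proposition-2.1}$(ii)$) these sets partition $\mathcal{C}$. Hence to prove discreteness it suffices to show that every point of $\mathcal{C}$ is isolated, and since each $\mathcal{C}_{i,j}$ is open in $\mathcal{C}$, a point $x\in\mathcal{C}_{i,j}$ will be isolated in $\mathcal{C}$ as soon as $\{x\}$ is open in $\mathcal{C}$.

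First I would settle the diagonal piece $\mathcal{C}_{0,0}=\langle ab\rangle$. This is a clopen infinite cyclic subsemigroup of $\mathcal{C}$, so with the induced topology it is Hausdorff and semitopological, and, being open in the Baire space $(\mathcal{C},\tau)$, it is again a Baire space. Lemma~\ref{lemma-3.7} then forces $\mathcal{C}_{0,0}$ to be discrete, and because $\mathcal{C}_{0,0}$ is open in $\mathcal{C}$, every power $(ab)^q$ is an isolated point of $(\mathcal{C},\tau)$.

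The remaining work is to transport this isolation to an arbitrary $x=b^k(ab)^la^m$ with $(k,m)\neq(0,0)$. The idea is to hit $x$ with a composition of one-sided translations that lands inside $\mathcal{C}_{0,0}$: I would take $f(z)=a^k\,z\,b^m$, omitting the left factor when $k=0$ and the right factor when $m=0$. This map is continuous because in a semitopological semigroup every left and every right translation is continuous. Using the multiplication rule~(\ref{eq-2.1}) one checks that $a^kb^k=ab$ and $a^mb^m=ab$ whenever $k,m\geqslant 1$, whence $f\bigl(b^k(ab)^pa^m\bigr)=(ab)^{p+2}$; in the degenerate cases $k=0$ (resp.\ $m=0$) only the right (resp.\ left) translation is applied and the value becomes $(ab)^{p+1}$. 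In every case $f$ restricts to an injection of $\mathcal{C}_{k,m}$ onto a set of isolated powers of $ab$.

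Finally, since $f$ is continuous and $f(x)$ is an isolated, hence open, point of $\mathcal{C}$, the preimage $f^{-1}\bigl(f(x)\bigr)$ is open; intersecting it with the open set $\mathcal{C}_{k,m}$ and using the injectivity of $f$ on $\mathcal{C}_{k,m}$ yields $\{x\}=f^{-1}\bigl(f(x)\bigr)\cap\mathcal{C}_{k,m}$, so $\{x\}$ is open and $x$ is isolated. Consequently $(\mathcal{C},\tau)$ is discrete. I expect the only genuine obstacle to be the bookkeeping in~(\ref{eq-2.1}): one must verify that $f$ is injective on the whole strip $\mathcal{C}_{k,m}$, and not merely that it sends $x$ into $\mathcal{C}_{0,0}$, and one must treat the boundary cases $k=0$ and $m=0$ separately, since there a single one-sided translation is applied.
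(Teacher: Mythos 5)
Your proof is correct, and it shares the paper's skeleton: both arguments first make $\mathcal{C}_{0,0}=\langle ab\rangle$ discrete by combining its clopenness (Proposition~\ref{proposition-3.2}) with the Baire property of open subspaces and Lemma~\ref{lemma-3.7}, and then transport isolation to all other points by pulling back isolated points of $\mathcal{C}_{0,0}$ along continuous translations. The transport step, however, is organized genuinely differently. The paper never needs the full strength of Theorem~\ref{theorem-3.5}: it first proves isolation of the diagonal points $b^n(ab)^pa^n$ by induction on $n$ via the sandwich map $x\mapsto a\cdot x\cdot b$, exploiting that the equation $a\cdot X\cdot b=(ab)^{p+2}$ has a \emph{unique} solution in $\mathcal{C}$ (so the preimage of a singleton is already a singleton), and then reaches the off-diagonal points by applying the \emph{globally} injective maps $\varphi_{i,j}(x)=b^i\cdot x\cdot a^j$ of Remark~\ref{remark-2.10a}$(i)$. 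You instead hit each strip $\mathcal{C}_{k,m}$ in one shot with $f(z)=a^k z b^m$; the subtlety, which you correctly flag and handle, is that $f$ is \emph{not} injective on all of $\mathcal{C}$ (for instance $a\cdot(ab)^p a^q=a^{q+1}$ for every $p$, by formula (\ref{eq-3.3}), and dually $b^k(ab)^l\cdot b=b^{k+1}$ for every $l$), so $f^{-1}\bigl(f(x)\bigr)$ alone need not be a singleton, and you must cut it down with the open strip $\mathcal{C}_{k,m}$ — which is exactly where Theorem~\ref{theorem-3.5} enters your argument essentially, whereas the paper avoids this dependence by choosing translations ($b^i$ on the left, $a^j$ on the right) that are injective everywhere. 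The trade-off: your version dispenses with the paper's induction along the diagonals and is shorter, at the cost of invoking the openness of every $\mathcal{C}_{i,j}$; the paper's version is self-contained modulo Proposition~\ref{proposition-3.2} alone.
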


\begin{proof}
By Proposition~\ref{proposition-3.2}, $\mathcal{C}_{ab}$ is an open-and-closed subsemigroup of $(\mathcal{C},\tau)$. Then by Proposition~1.14 of \cite{Haworth-McCoy1977} we have that $\mathcal{C}_{ab}$ is a Baire space and hence Lemma~\ref{lemma-3.7} implies that every element of $\mathcal{C}_{ab}$ is an isolated point of the topological space $(\mathcal{C},\tau)$.

Now, by formula (\ref{eq-3.1}), the equation $a\cdot X\cdot b=(ab)^{p+2}$ has a unique solution $X=b(ab)^pa$ for every nonnegative integer $p$, and since the semigroup operation in $(\mathcal{C},\tau)$ is separately continuous we conclude that $b(ab)^pa$ is an isolated point in $(\mathcal{C},\tau)$ for every integer $p\geqslant 0$. Similarly, formula (\ref{eq-3.1}) implies that the equation $a\cdot X\cdot b=b^n(ab)^{p}a^n$ has the unique solution $X=b^{n-1}(ab)^pa^{n-1}$ for every nonnegative integer $p$ and every integer $n>1$. Then by induction we get that the separate continuity of the semigroup operation in $(\mathcal{C},\tau)$ implies that $b^{n+1}(ab)^pa^{n+1}$ is an isolated point in the topological space $(\mathcal{C},\tau)$ for all nonnegative integers $n$ and $p$.

We fix arbitrary distinct nonnegative integers $n$ and $m$. We can assume without loss of generality that $n<m$. In the case when $m<n$ the proof is similar. Since by Remark~\ref{remark-2.10a}$(i)$ we have that the map $\varphi_{m-n,0}\colon\mathcal{C}\rightarrow\mathcal{C}$ defined by the formula $\varphi_{m-n,0}(x)=b^{m-n}\cdot x$ is injective and by the previous part of the proof, the point $b^m(ab)^pa^m$ is isolated in $(\mathcal{C},\tau)$ for every nonnegative integer $p$, we conclude that the separate continuity of the semigroup operation in $(\mathcal{C},\tau)$ implies that $b^n(ab)^pa^m$ is an isolated point in the topological space $(\mathcal{C},\tau)$ for every nonnegative integer $p$.
\end{proof}

Since every \v{C}ech complete space (and hence every locally compact space) is Baire, Theorem~\ref{theorem-3.8} implies Corollaries~\ref{corollary-3.9} and \ref{corollary-3.10}.

\begin{corollary}\label{corollary-3.9}
Every Hausdorff \v{C}ech complete (locally compact) topology $\tau$ on $\mathcal{C}$ such that $(\mathcal{C},\tau)$ is a Hausdorff semitopological semigroup is discrete.
\end{corollary}

\begin{corollary}\label{corollary-3.10}
Every Hausdorff Baire topology (and hence \v{C}ech complete or locally compact topology) $\tau$ on $\mathcal{C}$ such that $(\mathcal{C},\tau)$ is a Hausdorff topological semigroup is discrete.
\end{corollary}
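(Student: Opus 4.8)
The plan is to derive this corollary directly from Theorem~\ref{theorem-3.8}, since the only genuine content is already contained there. First I would recall the standard fact that every jointly continuous semigroup operation is separately continuous, so that every topological semigroup is automatically a semitopological semigroup. Consequently, if $\tau$ is a Hausdorff Baire topology on $\mathcal{C}$ for which $(\mathcal{C},\tau)$ is a topological semigroup, then $(\mathcal{C},\tau)$ is in particular a Hausdorff Baire semitopological semigroup, and Theorem~\ref{theorem-3.8} applies verbatim to yield that $\tau$ is discrete.

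For the parenthetical strengthening to \v{C}ech complete and locally compact topologies, I would invoke the purely topological implications recalled just before the statement: every \v{C}ech complete space is a Baire space, and every locally compact (Hausdorff) space is \v{C}ech complete and hence Baire. Thus a \v{C}ech complete or locally compact topology making $(\mathcal{C},\tau)$ a topological semigroup is in particular a Baire topology of the same type, so the conclusion again follows from the Baire case handled above.

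I do not anticipate any real obstacle here: the corollary is a formal consequence of Theorem~\ref{theorem-3.8} together with the trivial implication that a topological semigroup is semitopological and the standard Baire-category properties of \v{C}ech complete and locally compact spaces. The entire combinatorial and topological work, namely the isolation of the points of $\mathcal{C}_{ab}$ via Lemma~\ref{lemma-3.7} and the propagation of isolatedness through the separately continuous translations, has already been carried out in the proof of Theorem~\ref{theorem-3.8}, so nothing further needs to be computed.
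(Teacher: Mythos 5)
Your proof is correct and follows exactly the paper's route: the authors likewise deduce Corollary~\ref{corollary-3.10} directly from Theorem~\ref{theorem-3.8} via the observation that joint continuity implies separate continuity, together with the fact that \v{C}ech complete (hence also locally compact Hausdorff) spaces are Baire. Nothing is missing.
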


The following example implies that there exists a Tychonoff nondiscrete topology $\tau_p$ on the semigroup $\mathcal{C}$ such that $(\mathcal{C},\tau_p)$ is a topological semigroup.

\begin{example}\label{example-3.10}
Let $p$ be a fixed prime number. We define a topology $\tau_p$ on the semigroup $\mathcal{C}$ by the base $\mathscr{B}_p\!\!\left(b^i(ab)^ka^j\right)= \left\{U_{\alpha}\!\!\left(b^i(ab)^ka^j\right)\mid \alpha=1,2,3,\ldots\right\}$ at every point $b^i(ab)^ka^j\in\mathcal{C}$, where $U_{\alpha}\!\!\left(b^i(ab)^ka^j\right)= \left\{b^i(ab)^{k+\lambda \cdot p^{\alpha}}a^j\mid \lambda=1,2,3,\ldots\right\}$. Simple verifications show that the topology $\tau_p$ on $\mathcal{C}$ is generated by the following metric:
\begin{equation*}
    d\left(b^{i_1}(ab)^{k_1}a^{j_1},b^{i_2}(ab)^{k_2}a^{j_2}\right)=
\left\{
  \begin{array}{ll}
    0,     & \hbox{if~} i_1=i_2, k_1=k_2 \hbox{~and~} j_1=j_2;\\
    2^{s}, & \hbox{if~} i_1=i_2, k_1\neq k_2 \hbox{~and~} j_1=j_2;\\
    1, & \hbox{otherwise,}
  \end{array}
\right.
\end{equation*}
where $s$ is the largest of $p$ which divides $|k_1-k_2|$. This implies that $(\mathcal{C},\tau_p)$ is a Tychonoff space. Also, it is easy to see that $U_{\alpha}\!\!\left(b^i(ab)^ka^j\right)$ is a closed subset of the topological space $(\mathcal{C},\tau_p)$, for every $b^i(ab)^ka^j\in\mathcal{C}$ and every positive integer $\alpha$, and hence $(\mathcal{C},\tau_p)$ is a $0$-dimensional topological space (i.e., $(\mathcal{C},\tau_p)$ has a base which consists of open-and-closed subsets). We observe that the topological space $(\mathcal{C},\tau_p)$ doesn't contain any isolated points.

For every positive integer $\alpha$ and arbitrary elements $b^k(ab)^la^m$ and $b^n(ab)^ta^q$ of the semigroup $\mathcal{C}$, formula (\ref{eq-2.1}) implies that the following conditions hold:
\begin{itemize}
  \item[$(i)$] if $m<n$ then $U_{\alpha}\!\!\left(b^k(ab)^la^m\right)\cdot U_{\alpha}\!\!\left(b^n(ab)^ta^q\right)\subseteq U_{\alpha}\!\!\left(b^{k+n-m}(ab)^ta^q\right)$;
  \item[$(ii)$] if $m=n\neq 0$ then $U_{\alpha}\!\!\left(b^k(ab)^la^m\right)\cdot U_{\alpha}\!\!\left(b^n(ab)^ta^q\right)\subseteq U_{\alpha}\!\!\left(b^{k}(ab)^{l+t+1}a^q\right)$;
  \item[$(iii)$] if $m=n=0$ then $U_{\alpha}\!\!\left(b^k(ab)^la^m\right)\cdot U_{\alpha}\!\!\left(b^n(ab)^ta^q\right)\subseteq U_{\alpha}\!\!\left(b^{k}(ab)^{l+t}a^q\right)$; \; and
  \item[$(iv)$] if $m>n$ then $U_{\alpha}\!\!\left(b^k(ab)^la^m\right)\cdot U_{\alpha}\!\!\left(b^n(ab)^ta^q\right)\subseteq U_{\alpha}\!\!\left(b^k(ab)^la^{q+m-n}\right)$.
\end{itemize}
Therefore $(\mathcal{C},\tau_p)$ is a topological semigroup.
\end{example}


\section{On the closure and embedding of the semitopological semigroup $\mathcal{C}$}\label{s4}

In the case of the bicyclic semigroup $\mathcal{B}(a,b)$ we have that if a topological semigroup $S$ contains $\mathcal{B}(a,b)$ then the nonempty remainder of $\mathcal{B}(a,b)$  under the closure in $S$ is an ideal in $\operatorname{cl}_S(\mathcal{B}(a,b))$ (see \cite{EberhartSelden1969}). This immediately follows from that facts that the bicyclic semigroup $\mathcal{B}(a,b)$ admits only the discrete topology which turns $\mathcal{B}(a,b)$ into a Hausdorff semitopological semigroup and that the equations $A\cdot X=B$ and $X\cdot A=B$ have finitely many solutions in $\mathcal{B}(a,b)$ (see \cite[Proposition~1]{BertmanWest1976} and \cite[Lemma~I.1]{EberhartSelden1969}).

The following example shows that the semigroup $\mathcal{C}$ with the discrete topology does not have similar ``properties of the closure'' as the bicyclic semigroup.

\begin{example}\label{example-4.1}
It well known that each element of the bicyclic semigroup $\mathcal{B}(a,b)$ is uniquely expressible as $b^ia^j$, where $i$ and $j$ are nonnegative integers. Since all elements of the semigroup have similar expressibility we shall denote later the elements of the bicyclic semigroup by underlining $\underline{b^ia^j}$.

We define a map $\pi\colon\mathcal{C}\rightarrow\mathcal{B}(a,b)$ by the formula $\pi(b^i(ab)ka^j)=\underline{b^ia^j}$. Simple verifications and formula (\ref{eq-2.1}) show that thus defined map $\pi$ is a homomorphism. We extend the semigroup operation from the semigroups $\mathcal{C}$ and $\mathcal{B}(a,b)$ on $S=\mathcal{C}\sqcup\mathcal{B}(a,b)$ in the following way:
\begin{equation*}
    b^k(ab)^la^m\star \underline{b^na^q}=
\left\{
  \begin{array}{ll}
    \underline{b^{k+n-m}a^q}, & \hbox{if~} m<n;\\
    \underline{b^{k}a^q},     & \hbox{if~} m=n;\\
    b^k(ab)^la^{q+m-n},       & \hbox{if~} m>n
  \end{array}
\right.
\end{equation*}
and
\begin{equation*}
    \underline{b^ka^m}\star b^n(ab)^pa^q=
\left\{
  \begin{array}{ll}
    b^{k+n-m}(ab)^pa^q,       & \hbox{if~} m<n;\\
    \underline{b^{k}a^q},     & \hbox{if~} m=n;\\
    \underline{b^ka^{q+m-n}}, & \hbox{if~} m>n.
  \end{array}
\right.
\end{equation*}
A routine check of all 118 cases and their compatibility shows that such a binary operation is associative.

Now, we define the topology $\tau$ on the semigroup $S$ in the following way:
\begin{itemize}
  \item[$(i)$] all elements of the semigroup $\mathcal{C}$ are isolated points in  $(S,\tau)$; \; and
  \item[$(ii)$] the family $\mathscr{B}\left(\underline{b^ia^j}\right)=\left\{ U_n\!\left(\underline{b^ia^j}\right)\mid n=1,2,3,\ldots\right\}$, where
      \begin{equation*}
      U_n\!\left(\underline{b^ia^j}\right)=\left\{\underline{b^ia^j}\right\} \cup\left\{b^i(ab)^ka^j\in\mathcal{C}\mid k=n,n+1,n+2,\ldots\right\},
      \end{equation*}
      is a base of the topology $\tau$ at the point $\underline{b^ia^j}\in \mathcal{B}(a,b)$.
\end{itemize}

Simple verifications show that $(S,\tau)$ is a Hausdorff $0$-dimensional scattered locally compact metrizable space.
\end{example}

\begin{proposition}\label{proposition-4.2}
$(S,\tau)$ is a topological semigroup.
\end{proposition}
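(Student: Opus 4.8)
The plan is to verify joint continuity of the extended operation, since its associativity is already granted by the case check carried out in Example~\ref{example-4.1}. Continuity at a pair $(x,y)$ means that for every basic open neighbourhood $W$ of $x\star y$ there exist neighbourhoods $U\ni x$ and $V\ni y$ with $U\star V\subseteq W$. I would organize the argument according to whether each of $x,y$ lies in $\mathcal{C}$ or in $\mathcal{B}(a,b)$, and in each case exhibit suitable basic neighbourhoods.

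The first reduction exploits Example~\ref{example-4.1}$(i)$: every point of $\mathcal{C}$ is isolated in $(S,\tau)$. Hence if both $x,y\in\mathcal{C}$, then $\{x\}$ and $\{y\}$ are open and $\{x\}\star\{y\}=\{x\star y\}$, so continuity is immediate. If exactly one factor lies in $\mathcal{C}$, say $x=b^k(ab)^la^m\in\mathcal{C}$ and $y=\underline{b^na^q}\in\mathcal{B}(a,b)$, I take $U=\{x\}$ and look for a single basic neighbourhood $V=U_{n'}(\underline{b^na^q})$ with $\{x\}\star V\subseteq W$; the symmetric case $x\in\mathcal{B}(a,b)$, $y\in\mathcal{C}$ is handled by taking $V=\{y\}$ and choosing $U$. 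The verification then splits along the trichotomy $m<n$, $m=n$, $m>n$ governing the value of $x\star y$. When $x\star y\in\mathcal{C}$ (the case $m>n$ for $x\in\mathcal{C}$, or $m<n$ for $x\in\mathcal{B}(a,b)$), the product is an isolated point, and using formula~\eqref{eq-2.1} together with the mixed formulas of Example~\ref{example-4.1} one checks that $x$ multiplied against \emph{every} element of any basic $V$ returns exactly $x\star y$, so any $V$ works. When $x\star y\in\mathcal{B}(a,b)$ and $W=U_s(x\star y)$, the elements of $V$ of the form $b^n(ab)^ta^q$ are sent by $\star$ to $\mathcal{C}$-points whose $b$- and $a$-exponents coincide with those of $x\star y$ and whose $(ab)$-exponent grows without bound in $t$ (for instance $b^{k+n-m}(ab)^ta^q$ when $m<n$, or $b^k(ab)^{l+t+1}a^q$ when $m=n\neq0$); choosing $n'$ large enough forces all these images into the tail described by $W$, while $x\star\underline{b^na^q}=x\star y\in W$ accounts for the base point.

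The genuinely two-sided case $x=\underline{b^ka^m}$, $y=\underline{b^na^q}$ in $\mathcal{B}(a,b)$ requires choosing \emph{two} basic neighbourhoods $U=U_{k'}(x)$ and $V=U_{n'}(y)$ and verifying that the four types of products that arise, namely $\mathcal{B}\star\mathcal{B}$, $\mathcal{B}\star\mathcal{C}$, $\mathcal{C}\star\mathcal{B}$ and $\mathcal{C}\star\mathcal{C}$, all land in a single $W=U_s(x\star y)$. Here the bicyclic product $x\star y$ fixes the target $b$- and $a$-exponents, and each of the four relevant formulas preserves precisely these exponents while producing an $(ab)$-exponent that tends to infinity as $k',n'\to\infty$; taking $k',n'\geq s$ simultaneously drives every product into $W$.

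I expect the only real obstacle to be the bookkeeping: keeping track, across the three subcases $m<n$, $m=n$, $m>n$ and the four product types, of which exponent must be made large and which $b$- and $a$-exponents must be matched. In each instance, however, the verification collapses to the single observation that the relevant $(ab)$-exponent can be forced to exceed $s$, so no genuine difficulty arises beyond the organization of the cases.
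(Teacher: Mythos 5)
Your proposal is correct and follows essentially the same route as the paper: both arguments use the isolatedness of the $\mathcal{C}$-points to reduce joint continuity to the three cases where at least one factor lies in $\mathcal{B}(a,b)$, and then verify, via the trichotomy $m<n$, $m=n$, $m>n$, that products of basic neighbourhoods land in the basic tail neighbourhoods $U_s(\cdot)$ (with the $\mathcal{C}$-valued products being constant, and the $\mathcal{B}$-valued targets absorbed by making the $(ab)$-exponent exceed $s$). Your write-up is in fact slightly more careful than the paper's in the case $x\in\mathcal{C}$, $y\in\mathcal{B}(a,b)$ with $k<m$, where the paper's stated inclusion contains a typographical slip and should read $\left\{b^i(ab)^la^k\right\}\star U_u\!\left(\underline{b^ma^p}\right)\subseteq U_u\!\left(\underline{b^{i-k+m}a^p}\right)$, exactly as your exponent bookkeeping gives.
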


\begin{proof}
The definition of the topology $\tau$ on $S$ implies that it suffices to show that the semigroup operation in $(S,\tau)$ is continuous in the following three cases:
\begin{itemize}
  \item[1)] $\underline{b^ia^k}\star\underline{b^ma^p}$;
  \item[2)] $\underline{b^ia^k}\star b^m(ab)^na^p$; \; and
  \item[3)] ${b^i(ab)^la^k}\star\underline{b^ma^p}$.
\end{itemize}

In case 1) we get that
\begin{equation*}
    \underline{b^ia^k}\star\underline{b^ma^p}=
\left\{
  \begin{array}{ll}
    \underline{b^{i-k+m}a^p},   & \hbox{if~} k<m;\\
    \underline{b^ia^p},         & \hbox{if~} k=m;\\
    \underline{b^{i}a^{k-m+p}}, & \hbox{if~}, k>m,
  \end{array}
\right.
\end{equation*}
and for every positive integer $u$ the following statements hold:
\begin{itemize}
  \item[$a)$] if $k<m$ then $U_u\!\left(\underline{b^ia^k}\right)\star U_u\!\left(\underline{b^ma^p}\right)\subseteq U_u\!\left(\underline{b^{i-k+m}a^p}\right)$;
  \item[$b)$] if $k=m$ then $U_u\!\left(\underline{b^ia^k}\right)\star U_u\!\left(\underline{b^ma^p}\right)\subseteq U_u\!\left(\underline{b^ia^p}\right)$;
  \item[$c)$] if $k>m$ then $U_u\!\left(\underline{b^ia^k}\right)\star U_u\!\left(\underline{b^ma^p}\right)\subseteq U_u\!\left(\underline{b^{i}a^{k-m+p}}\right)$.
\end{itemize}

In case 2) we have that
\begin{equation*}
    \underline{b^ia^k}\star b^m(ab)^na^p=
\left\{
  \begin{array}{ll}
    {b^{i-k+m}(ab)^na^p},       & \hbox{if~} k<m;\\
    \underline{b^ia^p},         & \hbox{if~} k=m;\\
    \underline{b^{i}a^{k-m+p}}, & \hbox{if~}, k>m,
  \end{array}
\right.
\end{equation*}
and hence for every positive integer $u$ the following statements hold:
\begin{itemize}
  \item[$a)$] if $k<m$ then $U_u\!\left(\underline{b^ia^k}\right)\star \left\{b^m(ab)^na^p\right\}= \left\{b^{i-k+m}(ab)^na^p\right\}$;
  \item[$b)$] if $k=m$ then $U_u\!\left(\underline{b^ia^k}\right)\star \left\{b^m(ab)^na^p\right\}\subseteq U_u\!\left(\underline{b^ia^p}\right)$;
  \item[$c)$] if $k>m$ then $U_u\!\left(\underline{b^ia^k}\right)\star \left\{b^m(ab)^na^p\right\}\subseteq U_u\!\left(\underline{b^{i}a^{k-m+p}}\right)$.
\end{itemize}

In case 3) we have that
\begin{equation*}
    {b^i(ab)^la^k}\star\underline{b^ma^p}=
\left\{
  \begin{array}{ll}
    \underline{b^{i-k+m}a^p}, & \hbox{if~} k<m;\\
    \underline{b^ia^p},       & \hbox{if~} k=m;\\
    b^{i}(ab)^la^{k-m+p},     & \hbox{if~}, k>m.
  \end{array}
\right.
\end{equation*}
Then for every positive integer $u$ the following statements hold:
\begin{itemize}
  \item[$a)$] if $k<m$ then $\left\{b^i(ab)^la^k\right\}\star U_u\!\left(\underline{b^ma^p}\right) \subseteq \left\{b^{i-k+m}(ab)^na^p\right\}$;
  \item[$b)$] if $k=m$ then $\left\{b^i(ab)^la^k\right\}\star U_u\!\left(\underline{b^ma^p}\right) \subseteq U_u\!\left(\underline{b^ia^p}\right)$;
  \item[$c)$] if $k>m$ then $\left\{b^i(ab)^la^k\right\}\star U_u\!\left(\underline{b^ma^p}\right)=\left\{b^{i}(ab)^la^{k-m+p}\right\}$.
\end{itemize}
This completes the proof of the proposition.
\end{proof}

The following example shows that the semigroup $\mathcal{C}$ with the discrete topology may has similar closure in a topological semigroup as the bicyclic semigroup.

\begin{example}\label{example-4.3}
Let $S$ be the semigroup $\mathcal{C}$ with adjoined zero $0$. We determine the topology $\tau$ on the semigroup $S$ in the following way:
\begin{itemize}
  \item[$(i)$] All elements of the semigroup $\mathcal{C}$ are isolated points in  $(S,\tau)$; \; and
  \item[$(ii)$] The family $\mathscr{B}(0)=\left\{U_n(0)\mid n=1,2,3,\ldots\right\}$, where $U_n(0)=\{0\} \cup\left\{b^i(ab)^ka^j\in\mathcal{C}\mid i,j\geqslant n\right\}$, is a base of the topology $\tau$ at the zero $0$.
\end{itemize}

Simple verifications show that $(S,\tau)$ is a Hausdorff $0$-dimensional scattered space.

Since all elements of the semigroup $\mathcal{C}$ are isolated points in $(S,\tau)$ we conclude that it is sufficient to show that the semigroup operation in $(S,\tau)$ is continuous in the following cases:
\begin{equation*}
0\cdot 0, \qquad 0\cdot b^m(ab)^na^p, \qquad \hbox{and} \qquad b^m(ab)^na^p\cdot 0.
\end{equation*}
Since the following assertions hold for each positive integer $i$:
\begin{itemize}
  \item[$(i)$] $U_i(0)\cdot U_i(0)\subseteq U_i(0)$;
  \item[$(ii)$] $U_{i+m}(0)\cdot \{b^m(ab)^na^p\}\subseteq U_i(0)$;
  \item[$(iii)$] $\{b^m(ab)^na^p\}\cdot U_{i+p}(0)\subseteq U_i(0)$,
\end{itemize}
we conclude that $(S,\tau)$ is a topological semigroup.
\end{example}

\begin{remark}\label{remark-4.4}
We observe that we can show that for the discrete semigroup $\mathcal{C}$ cases of closure of $\mathcal{C}$ in topological semigroups proposed in \cite{EberhartSelden1969} for the bicyclic semigroup can be realized in the following way: we identify the element $b^ia^j$ of the bicyclic semigroup with the subset $\mathcal{C}_{i,j}$ of the semigroup $\mathcal{C}$.

We don't know the answer to the following question: \emph{Does there exist a topological semigroup $S$ which contains $\mathcal{C}$ as a dense subsemigroup such that $S\setminus\mathcal{C}\neq\varnothing$ and $\mathcal{C}$ is an ideal of $S$?}
\end{remark}

The following proposition describes the closure of the semigroup $\mathcal{C}$ in an arbitrary semitopological semigroup.

\begin{proposition}\label{proposition-4.5}
Let $S$ be a Hausdorff semitopological semigroup which contains $\mathcal{C}$ as a dense subsemigroup. Then there exists a countable family $\mathscr{U}=\left\{U_{\mathcal{C}_{i,j}}\mid i,j=0,1,2,3,\ldots\right\}$ of open disjunctive subsets of the topological space $S$ such that $\mathcal{C}_{i,j}\subseteq U_{\mathcal{C}_{i,j}}$ for all nonnegative integers $i$ and $j$.
\end{proposition}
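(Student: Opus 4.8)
The plan is to transfer the structure theorem for semitopological topologies on $\mathcal{C}$ (Theorem~\ref{theorem-3.5}) to the subspace $\mathcal{C}\subseteq S$ and then to let the density of $\mathcal{C}$ in $S$ supply the disjointness for free. First I would check that the subspace topology which $\mathcal{C}$ inherits from $S$ is Hausdorff (since $S$ is) and makes $\mathcal{C}$ a semitopological semigroup: for each fixed $c\in\mathcal{C}$ the translations $\lambda_c,\rho_c\colon S\to S$ are continuous and carry $\mathcal{C}$ into itself (as $\mathcal{C}$ is a subsemigroup of $S$), so their restrictions to $\mathcal{C}$ are separately continuous in the subspace topology. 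Hence Theorem~\ref{theorem-3.5} applies verbatim and every block $\mathcal{C}_{i,j}$ is an open-and-closed subset of the subspace $\mathcal{C}$.

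Next, because $\mathcal{C}_{i,j}$ is open in the subspace $\mathcal{C}$, I would choose, for every pair of nonnegative integers $(i,j)$, an open set $U_{\mathcal{C}_{i,j}}\subseteq S$ with $U_{\mathcal{C}_{i,j}}\cap\mathcal{C}=\mathcal{C}_{i,j}$; in particular $\mathcal{C}_{i,j}\subseteq U_{\mathcal{C}_{i,j}}$, and the resulting family $\mathscr{U}=\{U_{\mathcal{C}_{i,j}}\mid i,j=0,1,2,\ldots\}$ is countable, being indexed by $\omega\times\omega$. It then remains only to verify pairwise disjointness, and here the density of $\mathcal{C}$ does all the work: for $(i,j)\neq(i',j')$ the set $U_{\mathcal{C}_{i,j}}\cap U_{\mathcal{C}_{i',j'}}$ is open in $S$, while $\bigl(U_{\mathcal{C}_{i,j}}\cap U_{\mathcal{C}_{i',j'}}\bigr)\cap\mathcal{C}=\mathcal{C}_{i,j}\cap\mathcal{C}_{i',j'}=\varnothing$ by the unique expressibility of elements of $\mathcal{C}$ (Proposition~\ref{proposition-2.1}$(ii)$). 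Thus $U_{\mathcal{C}_{i,j}}\cap U_{\mathcal{C}_{i',j'}}$ is an open subset of $S$ disjoint from the dense subset $\mathcal{C}$, and so it must be empty, which is exactly the required disjointness.

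The step I would at first expect to be the obstacle is disjointness itself: a priori two blocks $\mathcal{C}_{i,j}$ and $\mathcal{C}_{i',j'}$ could accumulate to common points of the remainder $S\setminus\mathcal{C}$, and sharing such limit points is typically what obstructs separation. The key point is that this never forces the open sets to overlap: any open $V\subseteq S$ with $V\cap\mathcal{C}=\varnothing$ is empty because $\mathcal{C}$ is dense, so two neighbourhoods that are already disjoint over $\mathcal{C}$ are automatically disjoint in all of $S$ (they may still both cluster at the same point of $S\setminus\mathcal{C}$, which is harmless).

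Consequently the only genuine content lies in the first step, the inheritance of the hypotheses of Theorem~\ref{theorem-3.5} by the subspace $\mathcal{C}$; Hausdorffness is immediate, and separate continuity follows from the elementary remark that restrictions of the separately continuous translations of $S$ to the subsemigroup $\mathcal{C}$ remain separately continuous. Once this is in place the argument is purely formal, and none of the finer semigroup computations (formulas (\ref{eq-2.1})--(\ref{eq-3.3})) used to establish Theorem~\ref{theorem-3.5} need to be repeated here.
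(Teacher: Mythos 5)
Your proof is correct, and it takes a genuinely different --- and much shorter --- route than the paper's. Your key move is to apply Theorem~\ref{theorem-3.5} to the subspace topology: since $\mathcal{C}$ is a subsemigroup of the semitopological semigroup $S$, the translations $\lambda_c,\rho_c$ with $c\in\mathcal{C}$ restrict to continuous self-maps of $\mathcal{C}$, so $\mathcal{C}$ with the induced (Hausdorff) topology is itself a semitopological semigroup and every block $\mathcal{C}_{i,j}$ is open in it; choosing open sets $U_{\mathcal{C}_{i,j}}\subseteq S$ with $U_{\mathcal{C}_{i,j}}\cap\mathcal{C}=\mathcal{C}_{i,j}$, pairwise disjointness is then automatic, because $U_{\mathcal{C}_{i,j}}\cap U_{\mathcal{C}_{i',j'}}$ is an open subset of $S$ missing the dense set $\mathcal{C}$, hence empty. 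The paper, by contrast, invokes Theorem~\ref{theorem-3.5} only to dispose of the degenerate case $S=\mathcal{C}$; for $S\neq\mathcal{C}$ it constructs the sets $U_{\mathcal{C}_{i,j}}$ explicitly and inductively inside $S$, as preimages of previously constructed sets under the translations $\lambda_a$ and $\rho_b$ with the fixed-point sets $\operatorname{Fix}(\lambda_{ab})$, $\operatorname{Fix}(\rho_{ab})$, $\operatorname{Fix}(\lambda_{b^na^n})$, $\operatorname{Fix}(\rho_{b^na^n})$ removed (these are closed, hence removable, by Hausdorffness and separate continuity), and then verifies disjointness case by case, again by density-type arguments. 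What the paper's construction buys is concretely described neighbourhoods --- complements of fixed-point sets of translations --- which exhibit how the remainder $S\setminus\mathcal{C}$ sits relative to the blocks; what your argument buys is economy and transparency: the proposition becomes an immediate corollary of Theorem~\ref{theorem-3.5} plus density, and it still delivers the normalization $U_{\mathcal{C}_{i,j}}\cap\mathcal{C}=\mathcal{C}_{i,j}$, which is exactly the property (in the form $U(ab)\cap\mathcal{C}\subseteq\mathcal{C}_{0,0}$) used later in the proof of Theorem~\ref{theorem-5.1}.
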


\begin{proof}
When $S=\mathcal{C}$ the statement of the proposition follows from Theorem~\ref{theorem-3.5}. Hence we can assume that $S\neq\mathcal{C}$.

First, we observe that formulae (\ref{eq-3.2}) and (\ref{eq-3.3}) imply that for left and right translations $\lambda_{ab}\colon S\rightarrow S\colon x\mapsto ab\cdot x$ and $\rho_{ab}\colon S\rightarrow S\colon x\mapsto x\cdot ab$ of the semigroup $S$ their sets of fixed points $\operatorname{Fix}(\lambda_{ab})$ and $\operatorname{Fix}(\rho_{ab})$ are non-empty and moreover
\begin{align*}
    & \bigcup\left\{\mathcal{C}_{i,j}\mid i=0,1,2,3,\ldots,\; j=1,2,3,\ldots\right\}\subseteq\operatorname{Fix}(\rho_{ab}); \qquad \hbox{and}\\
    & \bigcup\left\{\mathcal{C}_{i,j}\mid i=1,2,3,\ldots, \; j=0,1,2,3,\ldots\right\}\subseteq\operatorname{Fix}(\lambda_{ab}).
\end{align*}
Also, formulae (\ref{eq-3.1a}) and (\ref{eq-3.1b}) imply that for every positive integer $n$ the left and right translations $\lambda_{b^na^n}\colon S\rightarrow S\colon x\mapsto b^na^n\cdot x$ and $\rho_{b^na^n}\colon S\rightarrow S\colon x\mapsto x\cdot b^na^n$ of the semigroup $S$ have non-empty sets of fixed points $\operatorname{Fix}(\lambda_{b^na^n})$ and $\operatorname{Fix}(\rho_{b^na^n})$, and moreover
\begin{align*}
    & \bigcup\left\{\mathcal{C}_{i,j}\mid i=0,1,2,3,\ldots, \; j=n+1,n+2,n+3,\ldots\right\}\subseteq\operatorname{Fix}(\rho_{b^na^n}); \qquad \hbox{and}\\
    & \bigcup\left\{\mathcal{C}_{i,j}\mid i=n+1,n+2,n+3,\ldots, \; j=0,1,2,3,\ldots\right\}\subseteq\operatorname{Fix}(\lambda_{b^na^n}).
\end{align*}

Then the Hausdorffness of $S$, separate continuity of the semigroup operation on $S$ and Exercise~1.5.C of \cite{Engelking1989} imply that $\operatorname{Fix}(\lambda_{ab})$, $\operatorname{Fix}(\rho_{ab})$, $\operatorname{Fix}(\lambda_{b^na^n})$ and $\operatorname{Fix}(\rho_{b^na^n})$ are closed non-empty subset of $S$, for every positive integer $n$, and hence are retracts of $S$.

Now, since $\mathcal{C}_{0,0}\subseteq S\setminus \left(\operatorname{Fix}(\lambda_{ab})\cup \operatorname{Fix}(\rho_{ab})\right)$ we conclude that there exists an open subset $U_{\mathcal{C}_{0,0}}= S\setminus \left(\operatorname{Fix}(\lambda_{ab})\cup \operatorname{Fix}(\rho_{ab})\right)$ which contains the set $\mathcal{C}_{0,0}$ and $\mathcal{C}_{i,j}\cap U_{\mathcal{C}_{0,0}}=\varnothing$ for all nonnegative integers $i,j$ such that $i+j>0$.

Since the semigroup operation in $S$ is separately continuous we conclude that the map $\lambda_{a}\colon S\rightarrow S\colon x\mapsto a\cdot x$ is continuous, and hence $U_{\mathcal{C}_{1,0}}= \lambda_{a}^{-1}\left( U_{\mathcal{C}_{0,0}}\right)\setminus\left(\operatorname{Fix}(\rho_{ab})\cup \operatorname{Fix}(\lambda_{ba})\right)$ is an open subset of $S$. It is obvious that $\mathcal{C}_{1,0}\subseteq U_{\mathcal{C}_{1,0}}$. We claim that $U_{\mathcal{C}_{1,0}}\cap U_{\mathcal{C}_{0,0}}=\varnothing$. Suppose to the contrary that there exists $x\in S$ such that $x\in U_{\mathcal{C}_{1,0}}\cap U_{\mathcal{C}_{0,0}}$. Since $\operatorname{Fix}(\lambda_{ba})$ and $\operatorname{Fix}(\rho_{ba})$ are closed subsets of $S$ we conclude that there exists $(ab)^i\in U_{\mathcal{C}_{1,0}}\cap U_{\mathcal{C}_{0,0}}$. Then we have that $\lambda_{a}((ab)^i)=a\cdot (ab)^i=a\notin U_{\mathcal{C}_{0,0}}$, a contradiction. The obtained contradiction implies that $U_{\mathcal{C}_{1,0}}\cap U_{\mathcal{C}_{0,0}}=\varnothing$.

Also, the continuity of the right shift $\rho_{b}\colon S\rightarrow S\colon x\mapsto x\cdot b$ implies that $U_{\mathcal{C}_{0,1}}=\rho_{b}^{-1}\left( U_{\mathcal{C}_{0,0}}\right)\setminus\left(\operatorname{Fix}(\lambda_{ab})\cup \operatorname{Fix}(\rho_{ba})\right)$ is an open neighbourhood of the set $\mathcal{C}_{0,1}$ in $S$. Similar arguments as in the previous case imply that $U_{\mathcal{C}_{0,1}}\cap U_{\mathcal{C}_{0,0}}=\varnothing$.

Suppose that there exists $x\in S$ such that $x\in U_{\mathcal{C}_{1,0}}\cap U_{\mathcal{C}_{0,1}}$. If $x\in\mathcal{C}$ then $x=b(ab)^p$ for some nonnegative integer $p$. Then we have that $\rho_{b}(x)=x\cdot b=b(ab)^p\cdot b=b^2\notin U_{\mathcal{C}_{0,0}}$. If $x\in U_{\mathcal{C}_{1,0}}\setminus\mathcal{C}$ then every open neighbourhood $V(x)$ of the point $x$ in the topological space $S$ contains infinitely many points of the form $b(ab)^p\in\mathcal{C}$. Then we have that $\rho_{b}(V(x))\ni b^2$. The obtained contradiction implies that $U_{\mathcal{C}_{1,0}}\cap U_{\mathcal{C}_{0,1}}=\varnothing$.

We put $U_{\mathcal{C}_{1,1}}= \left(\rho_{b}^{-1}\left(U_{\mathcal{C}_{1,0}}\right) \cap \lambda_{a}^{-1}\left(U_{\mathcal{C}_{0,1}}\right)\right)\setminus \left(\operatorname{Fix}(\lambda_{ba})\cup \operatorname{Fix}(\rho_{ba})\right)$.
Then $U_{\mathcal{C}_{1,1}}$ is an open subset of the topological space $S$ such that $\mathcal{C}_{1,1}\subseteq U_{\mathcal{C}_{1,1}}$. Similar arguments as in the previous cases imply that $U_{\mathcal{C}_{1,1}}\cap U_{\mathcal{C}_{0,1}}= U_{\mathcal{C}_{1,0}}\cap U_{\mathcal{C}_{1,1}}=U_{\mathcal{C}_{1,1}}\cap U_{\mathcal{C}_{0,0}}=\varnothing$.

Next, we use induction for constructing the family $\mathscr{U}$. Suppose that for some positive integer $n\geqslant 1$ we have already constructed the family $\mathscr{U}_n=\left\{U^{\prime}_{\mathcal{C}_{i,j}}\mid i,j=0,1,\ldots, n\right\}$ of open disjunctive subsets of the topological space $S$ with the property $\mathcal{C}_{i,j} \subseteq U_{\mathcal{C}_{i,j}}$, for all $i,j=0,1,\ldots, n$. We shall construct the family $\mathscr{U}_{n+1}=\left\{U_{\mathcal{C}_{i,j}}\mid i,j=0,1,\ldots, n,n+1\right\}$ in the following way. For all $i,j\leqslant n$ we put $U_{\mathcal{C}_{i,j}}=U^{\prime}_{\mathcal{C}_{i,j}}\in\mathscr{U}_n$ and
\begin{equation*}
\begin{split}
& U_{\mathcal{C}_{0,n+1}}=\rho_{b}^{-1}\left( U_{\mathcal{C}_{0,n}}\right)\setminus \left(\operatorname{Fix}(\lambda_{ab})\cup \operatorname{Fix}(\rho_{b^{n+1}a^{n+1}})\right); \\
& U_{\mathcal{C}_{1,n+1}}=\rho_{b}^{-1}\left( U_{\mathcal{C}_{1,n}}\right)\setminus \left(\operatorname{Fix}(\lambda_{ba})\cup \operatorname{Fix}(\rho_{b^{n+1}a^{n+1}})\right);\\
& \cdots\qquad\cdots\qquad\cdots\qquad\cdots\qquad\cdots\qquad\cdots\qquad\cdots\\
& U_{\mathcal{C}_{n,n+1}}=\rho_{b}^{-1}\left( U_{\mathcal{C}_{n-1,n}}\right)\setminus \left(\operatorname{Fix}(\lambda_{b^na^n})\cup \operatorname{Fix}(\rho_{b^{n+1}a^{n+1}})\right);\\
& U_{\mathcal{C}_{n+1,0}}=\lambda_{a}^{-1}\left( U_{\mathcal{C}_{n,0}}\right) \setminus \left(\operatorname{Fix}(\rho_{ab})\cup \operatorname{Fix}(\lambda_{b^{n+1}a^{n+1}})\right); \\
& U_{\mathcal{C}_{n+1,1}}=\lambda_{a}^{-1}\left( U_{\mathcal{C}_{n,1}}\right) \setminus \left(\operatorname{Fix}(\rho_{ba})\cup \operatorname{Fix}(\lambda_{b^{n+1}a^{n+1}})\right); \\
& \cdots\qquad\cdots\qquad\cdots\qquad\cdots\qquad\cdots\qquad\cdots\qquad\cdots\\
& U_{\mathcal{C}_{n+1,n}}=\lambda_{a}^{-1}\left( U_{\mathcal{C}_{n,n}}\right) \setminus \left(\operatorname{Fix}(\rho_{b^na^n})\cup \operatorname{Fix}(\lambda_{b^{n+1}a^{n+1}})\right); \\
& U_{\mathcal{C}_{n+1,n+1}}=\left( \rho_{b}^{-1}\left(U_{\mathcal{C}_{n+1,n}}\right)\cap \lambda_{a}^{-1}\left(U_{\mathcal{C}_{n,n+1}}\right)\right)\setminus \left(\operatorname{Fix}(\rho_{b^{n+1}a^{n+1}})\cup \operatorname{Fix}(\lambda_{b^{n+1}a^{n+1}})\right).
\end{split}
\end{equation*}

Similar arguments as in previous case imply that $\mathscr{U}_{n+1}$ is a family of open disjunctive subsets of the topological space $S$ with the property $\mathcal{C}_{i,j} \subseteq U_{\mathcal{C}_{i,j}}$, for all $i,j=0,1,\ldots, n+1$.

Next, we put $\displaystyle\mathscr{U}=\bigcup_{n=0}^\infty \mathscr{U}_n$. It is easy to see that the family $\mathscr{U}$ is as required. This completes the proof of the proposition.
\end{proof}


It well known that if a topological semigroup $S$ is a continuous image of a topological semigroup $T$ such that $T$ is embeddable into a compact topological semigroup, then the semigroup $S$ is not necessarily embeddable into a compact topological semigroup. For example, the bicyclic semigroup $\mathcal{B}(a,b)$ does not embed into any compact topological semigroup, but $\mathcal{B}(a,b)$ admits only discrete semigroup topology and $\mathcal{B}(a,b)$ is a continuous image of the free semigroup $F_2$ of the rank 2 (i.e., generated by two elements) with the discrete topology. Moreover, the semigroup $F_2$ with adjoined zero $0$ admits a compact Hausdorff semigroup topology $\tau_c$: all elements of $F_2$ are isolated points and the family $\mathscr{B}_0=\{U_n\mid n=1,2,3,\ldots\}$, where the set $U_n$ consists of zero $0$ and all words of length $\geqslant n$. Therefore it is natural to ask the following: \emph{Does there exist a Hausdorff compact topological semigroup $S$ which contains the semigroup $\mathcal{C}$?} The following theorem gives a negative answer to this question.

\begin{theorem}\label{theorem-5.1}
There does not exist a Hausdorff topological semigroup $S$ with a countably compact square $S\times S$ such that $S$ contains $\mathcal{C}$ as a subsemigroup.
\end{theorem}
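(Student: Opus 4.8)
The plan is to derive a contradiction from the interplay between two facts: on the one hand, the hypothesis that $S\times S$ is countably compact should force $S$ to be \emph{stable} in the sense of Definition~\ref{definition-2.2}; on the other hand, the copy of $\mathcal{C}$ sitting inside $S$ will be used to exhibit an explicit failure of condition~$(ii)$ of stability \emph{in $S$ itself}. Note that stability is not inherited by subsemigroups, so Proposition~\ref{proposition-2.3} alone does not suffice: the witnesses of non-stability must be produced directly in $S$, and the key point is that they can be, because the relevant identities in $\mathcal{C}$ are preserved under the embedding $\mathcal{C}\hookrightarrow S$.

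First I would record the single algebraic identity that does all the work. By formula~(\ref{eq-2.1}) one has $ab\cdot b=b$ in $\mathcal{C}$, hence also in $S$. Consequently, for every $s\in S$,
\[
 ab\cdot(bs)=(ab\cdot b)\,s=bs,
\]
so the left translation $\lambda_{ab}$ fixes every element of $bS$ and, in particular, $bs=ab\cdot(bs)\in abS$. This proves $bS\subseteq abS$. To see that the inclusion is proper, consider $(ab)^2=ab\cdot ab\in abS$. If we had $bS=abS$, then $(ab)^2\in bS$, say $(ab)^2=bs_0$ for some $s_0\in S$; applying $\lambda_{ab}$ and using that it fixes $bS$ pointwise gives $(ab)^3=ab\cdot(ab)^2=ab\cdot(bs_0)=bs_0=(ab)^2$. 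But $(ab)^2\neq(ab)^3$ in $\mathcal{C}$, and since $\mathcal{C}\hookrightarrow S$ is injective these remain distinct in $S$, a contradiction. Thus $(ab)^2\in abS\setminus bS$, so $bS\subsetneq abS$; taking $s=b$ and $t=a$ (so that $ts=ab$) the implication in Definition~\ref{definition-2.2}$(ii)$ fails, and hence $S$ is not stable.

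It then remains only to contradict this with the hypothesis on $S$. The final step is to invoke the stability principle for compact-like topological semigroups: a Hausdorff topological semigroup whose square is countably compact is stable (the classical statement is that compact topological semigroups are stable, cf.\ \cite{Koch-Wallace1957, CHK}, and the countably-compact-square version is what is needed here). Combined with the previous paragraph this yields the desired contradiction, so no such $S$ can contain $\mathcal{C}$.

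I expect the main obstacle to be exactly this last ingredient: proving (or locating in the cited literature) the implication ``$S\times S$ countably compact $\Rightarrow$ $S$ stable.'' The compact case is standard, but the countably-compact-square version requires care, since one must replace the usual compactness argument (stabilisation of a descending chain of closed principal ideals, extracted from a convergent net) by one using only countable compactness of $S\times S$; the square is precisely what is needed, because the relevant limiting argument involves simultaneously controlling an element and one of its translates. The algebraic half of the argument, by contrast, is completely explicit and causes no difficulty once the identity $ab\cdot b=b$ is isolated.
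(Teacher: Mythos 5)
Your algebraic half is correct and genuinely in the spirit of the paper's Section~\ref{s2}: the identity $ab\cdot b=b$ (which is just the defining relation $ab^2=b$) gives $bS\subseteq abS$, and your argument that $(ab)^2\in abS\setminus bS$ --- using injectivity of $\mathcal{C}\hookrightarrow S$ and the uniqueness of normal forms from Proposition~\ref{proposition-2.1}$(ii)$ --- is sound, so any Hausdorff semigroup $S$ containing $\mathcal{C}$ violates condition $(ii)$ of Definition~\ref{definition-2.2}. You are also right that this has to be exhibited in $S$ itself rather than quoted from Proposition~\ref{proposition-2.3}, since stability does not pass to subsemigroups.

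The gap is exactly the ingredient you flag at the end: the implication ``$S\times S$ countably compact $\Rightarrow S$ stable'' is neither proved by you nor available in the literature, and it is not a safe thing to assume. The classical Koch--Wallace result \cite{Koch-Wallace1957} gives stability for \emph{compact} topological semigroups, and its proof leans on tools that collapse under countable compactness: one needs idempotents and kernels in closed subsemigroups such as $\operatorname{cl}_S\left(\{t^n\mid n\geqslant 1\}\right)$, whereas (as recalled in Section~\ref{s1}) a countably compact topological semigroup need not contain any idempotent, and by the Banakh--Dimitrova--Gutik construction (under CH or MA) a countably compact topological semigroup may even contain the bicyclic semigroup and hence be unstable. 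So countable compactness alone provably does not suffice, and the ``square'' version you need is a substantial open-ended claim, not a citable fact --- it may well be harder than the theorem you are trying to prove. The paper avoids this entirely by arguing directly with the embedded copy of $\mathcal{C}$, exploiting the constant-product identity $a^nb^n=ab$ for all $n$: assuming $\mathcal{C}$ dense in $S$, countable compactness of $S\times S$ yields an accumulation point $(x,y)$ of the sequence $\left\{(a^n,b^n)\right\}_{n=1}^{\infty}$, joint continuity forces $xy=ab$, Proposition~\ref{proposition-4.5} supplies a neighbourhood $U(ab)$ with $U(ab)\cap\mathcal{C}\subseteq\mathcal{C}_{0,0}$, and picking $a^i\in U(x)$, $b^j\in U(y)$ with $j>i$ gives $a^ib^j=b^{j-i}\in U(ab)\cap\mathcal{C}\subseteq\mathcal{C}_{0,0}$, a contradiction. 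To repair your proposal you would either have to actually prove the stability principle for semigroups with countably compact square, or drop the reduction to stability and run a constant-product accumulation-point argument of this kind; note that such an argument needs a separation statement like Proposition~\ref{proposition-4.5} about the closure of $\mathcal{C}$ in $S$, which your outline nowhere provides.
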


\begin{proof}
Suppose to the contrary that there exists a Hausdorff topological semigroup $S$ with a countably compact square $S\times S$ which contains $\mathcal{C}$ as a subsemigroup. Then since the closure of a subsemigroup $\mathcal{C}$ in a topological semigroup $S$ is a subsemigroup of $S$ (see \cite[Vol.~1, p.~9]{CHK}) we conclude that Theorem~3.10.4 from \cite{Engelking1989} implies that without loss of generality we can assume that $\mathcal{C}$ is a dense subsemigroup of the topological semigroup $S$. We consider the sequence $\left\{(a^n,b^n)\right\}_{n=1}^\infty$ in $\mathcal{C}\times\mathcal{C}\subseteq S\times S$. Since $S\times S$ is countably compact we conclude that this sequence has an accumulation point $(x;y)\in S\times S$. Since $a^nb^n=ab$, the continuity of the semigroup operation in $S$ implies that $xy=ab$. By Proposition~\ref{proposition-4.5} there exists an open neighbourhood $U(ab)$ of the point $ab$ in $S$ such that $U(ab)\cap\mathcal{C}\subseteq\mathcal{C}_{0,0}$. Then the continuity of the semigroup operation in $S$ implies that there exist open neighbourhoods $U(x)$ and $U(y)$ of the points $x$ and $y$ in $S$ such that $U(x)\cdot U(y)\subseteq U(ab)$. Next, by the countable compactness of $S\times S$ we conclude that $S$ is countably compact, too, as a continuous image of $S\times S$ under the projection, and this implies that $x$ and $y$ are accumulation points of the sequences $\{a^n\}_{n=1}^\infty$ and $\{b^n\}_{n=1}^\infty$ in $S$, respectively. Then there exist positive integers $i$ and $j$ such that $a^i\in U(x)$, $b^j\in U(y)$ and $j>i$. Therefore we get that
\begin{equation*}
    a^i\cdot b^j=b^{j-i}\in\left(U(x)\cdot U(y)\right)\cap\mathcal{C}\subseteq \left(U(ab)\right)\cap\mathcal{C}\subseteq \mathcal{C}_{0,0},
\end{equation*}
which is a contradiction. The obtained contradiction implies the statement of the theorem.
\end{proof}

Theorem~\ref{theorem-5.1} implies the following corollaries:

\begin{corollary}\label{corollary-5.2}
There does not exist a Hausdorff compact topological semigroup which contains $\mathcal{C}$ as a subsemigroup.
\end{corollary}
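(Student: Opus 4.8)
The plan is to obtain Corollary~\ref{corollary-5.2} as an immediate specialization of Theorem~\ref{theorem-5.1}, so the whole task reduces to verifying that the hypothesis ``compact'' is strictly stronger than the hypothesis ``the square $S\times S$ is countably compact'' that appears in the theorem. I would argue by contradiction: suppose there exists a Hausdorff compact topological semigroup $S$ containing $\mathcal{C}$ as a subsemigroup, and then produce the configuration forbidden by Theorem~\ref{theorem-5.1}.

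First I would recall the two standard facts from general topology that do all the work. The product of two compact spaces is compact (this is the finite case of the Tychonoff theorem, and can also be seen directly via the tube lemma), so if $S$ is compact then $S\times S$ is compact. Second, every compact space is countably compact, since any countable open cover is in particular an open cover and hence admits a finite subcover. Combining these two observations, compactness of $S$ yields that $S\times S$ is countably compact.

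Consequently, if such a compact $S$ contained $\mathcal{C}$, then $S$ would be a Hausdorff topological semigroup with countably compact square containing $\mathcal{C}$ as a subsemigroup, directly contradicting Theorem~\ref{theorem-5.1}. This contradiction gives the conclusion. I do not expect any genuine obstacle here: the corollary is a routine weakening of the theorem, and the only step requiring comment is the passage from compactness of $S$ to countable compactness of $S\times S$, which is the elementary topological implication recorded above. No further use of the algebraic structure of $\mathcal{C}$ or of the closure description in Proposition~\ref{proposition-4.5} is needed, since all of that has already been absorbed into the proof of Theorem~\ref{theorem-5.1}.
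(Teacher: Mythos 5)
Your proposal is correct and matches the paper's route exactly: the paper also obtains Corollary~\ref{corollary-5.2} as an immediate consequence of Theorem~\ref{theorem-5.1}, leaving implicit the elementary facts (compactness of $S$ implies compactness, hence countable compactness, of $S\times S$) that you spell out. No issues.
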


\begin{corollary}\label{corollary-5.3}
There does not exist a Hausdorff sequentially compact topological semigroup which contains $\mathcal{C}$ as a subsemigroup.
\end{corollary}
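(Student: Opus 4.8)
The plan is to obtain this statement as an immediate consequence of Theorem~\ref{theorem-5.1}. That theorem already rules out any Hausdorff topological semigroup $S$ whose square $S\times S$ is countably compact and which contains $\mathcal{C}$; hence it suffices to show that whenever $S$ is sequentially compact, its square $S\times S$ is countably compact. The argument is therefore a purely topological reduction, making no further use of the algebraic structure of $\mathcal{C}$ beyond what is already encoded in Theorem~\ref{theorem-5.1}.

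First I would recall the standard fact that sequential compactness is preserved under finite products. Given a sequence $\{(x_n,y_n)\}_{n=1}^\infty$ in $S\times S$, sequential compactness of $S$ yields a subsequence $\{x_{n_k}\}$ converging to some $x\in S$; applying sequential compactness of $S$ once more to $\{y_{n_k}\}$ yields a further subsequence $\{y_{n_{k_j}}\}$ converging to some $y\in S$. Then $\{(x_{n_{k_j}},y_{n_{k_j}})\}$ converges to $(x,y)$, so $S\times S$ is sequentially compact. Next I would invoke the equally standard implication that every sequentially compact space is countably compact (see \cite{Engelking1989}). Combining these two facts, if $S$ is a Hausdorff sequentially compact topological semigroup, then its square $S\times S$ is countably compact.

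Finally, suppose for contradiction that some Hausdorff sequentially compact topological semigroup $S$ contained $\mathcal{C}$ as a subsemigroup. By the previous paragraph, $S\times S$ would be countably compact, so $S$ would be a Hausdorff topological semigroup with countably compact square containing $\mathcal{C}$, directly contradicting Theorem~\ref{theorem-5.1}. This contradiction proves the corollary. I do not anticipate any genuine obstacle here: the only content beyond Theorem~\ref{theorem-5.1} is the two routine topological observations about sequential compactness, and the remainder is a one-line appeal to the contrapositive.
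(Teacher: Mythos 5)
Your proof is correct and follows exactly the route the paper intends: the paper states Corollary~\ref{corollary-5.3} as an immediate consequence of Theorem~\ref{theorem-5.1}, with the implicit justification being precisely your two standard observations (finite products of sequentially compact spaces are sequentially compact, and sequential compactness implies countable compactness). You have simply made explicit the reduction the paper leaves to the reader, and both steps are carried out correctly.
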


We recall that the Stone-\v{C}ech compactification of a Tychonoff space $X$ is a compact Hausdorff space $\beta X$ containing $X$ as a dense subspace so that each continuous map $f\colon X\rightarrow Y$ to a compact Hausdorff space $Y$ extends to a continuous map $\overline{f}\colon \beta X\rightarrow Y$ \cite{Engelking1989}.

\begin{theorem}\label{theorem-5.4}
There does not exist a Tychonoff topological semigroup $S$ with the pseudocompact square $S\times S$ which contains $\mathcal{C}$ as subsemigroup.
\end{theorem}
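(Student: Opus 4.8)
The plan is to mimic the proof of Theorem~\ref{theorem-5.1}, but to replace the countable compactness of $S \times S$ by pseudocompactness via the standard device of passing to the Stone-\v{C}ech compactification. First I would suppose, for contradiction, that such a Tychonoff topological semigroup $S$ with pseudocompact square exists, and, exactly as before, use the fact that the closure of a subsemigroup is a subsemigroup together with Theorem~3.10.4 of \cite{Engelking1989} to reduce to the case where $\mathcal{C}$ is dense in $S$. The key structural input is that a Tychonoff topological semigroup $S$ with pseudocompact square extends to a compact topological semigroup: since $S \times S$ is pseudocompact and Tychonoff, by Glicksberg's theorem $\beta(S \times S) = \beta S \times \beta S$, and the separately (indeed jointly) continuous multiplication $S \times S \to S$ extends to a continuous map $\beta S \times \beta S \to \beta S$ which, by a density and associativity argument, makes $\beta S$ into a compact topological semigroup containing $S$.

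With $\beta S$ in hand, the strategy is to run the accumulation-point argument of Theorem~\ref{theorem-5.1} inside the compact semigroup $\beta S$. I would consider the sequence $\{(a^n, b^n)\}_{n=1}^\infty$ in $S \times S \subseteq \beta S \times \beta S$; by compactness of $\beta S \times \beta S$ it has an accumulation point $(x; y)$, and since $a^n b^n = ab$ for all $n$, joint continuity of multiplication in $\beta S$ forces $xy = ab$. Here $ab \in \mathcal{C} \subseteq S$, and by Proposition~\ref{proposition-4.5} applied to the dense embedding $\mathcal{C} \hookrightarrow S$ there is an open neighbourhood $U(ab)$ in $S$ with $U(ab) \cap \mathcal{C} \subseteq \mathcal{C}_{0,0}$. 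Continuity of multiplication then yields neighbourhoods $U(x), U(y)$ in $\beta S$ with $U(x) \cdot U(y) \subseteq \widetilde{U}(ab)$ for a suitable open set $\widetilde{U}(ab)$ in $\beta S$ restricting to $U(ab)$ on $S$. Since $x$ and $y$ are accumulation points of $\{a^n\}$ and $\{b^n\}$ respectively, I can pick positive integers $i < j$ with $a^i \in U(x)$ and $b^j \in U(y)$, whence $a^i b^j = b^{j-i} \in (U(x) \cdot U(y)) \cap \mathcal{C} \subseteq \mathcal{C}_{0,0}$, contradicting $b^{j-i} \notin \mathcal{C}_{0,0}$.

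The main obstacle I expect is the passage to $\beta S$ as a \emph{topological} (jointly continuous) semigroup. Glicksberg's theorem gives $\beta(S \times S) = \beta S \times \beta S$ precisely because $S \times S$ is pseudocompact, which is exactly the hypothesis we are handed; this is the crucial place where pseudocompactness of the square (rather than of $S$ alone) is used. Granting this, the multiplication $m \colon S \times S \to S \subseteq \beta S$ extends to a continuous $\overline{m} \colon \beta S \times \beta S \to \beta S$ by the universal property, and one must check that $\overline{m}$ is associative: this follows because the two associated maps $\beta S^3 \to \beta S$ agree on the dense subset $S^3$ and $\beta S$ is Hausdorff. The one subtlety is that $\beta S \times \beta S$ need not coincide with $\beta(S \times S)$ for general $S$, so pseudocompactness of the square is genuinely needed and cannot be weakened to pseudocompactness of $S$. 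Once $\beta S$ is established as a compact Hausdorff topological semigroup containing $S$ (hence $\mathcal{C}$) as a dense subsemigroup, the contradiction is obtained exactly as in Theorem~\ref{theorem-5.1}, and indeed the result could alternatively be phrased as a corollary of Corollary~\ref{corollary-5.2} once the embedding of $S$ into the compact semigroup $\beta S$ is in place.
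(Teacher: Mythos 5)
Your proposal is correct and takes essentially the same route as the paper: the paper simply cites Theorem~1.3 of \cite{BanakhDimitrova2010} (whose content is exactly the Glicksberg-based extension of the multiplication to $\beta S\times\beta S\rightarrow\beta S$ that you sketch) to conclude that $\beta S$ is a compact topological semigroup containing $\mathcal{C}$, and then invokes Corollary~\ref{corollary-5.2}. Your re-run of the accumulation-point argument of Theorem~\ref{theorem-5.1} inside $\beta S$ is sound but redundant, as you yourself observe in your final sentence.
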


\begin{proof}
By Theorem~1.3 from \cite{BanakhDimitrova2010}, for any topological
semigroup $S$ with the pseudocompact square $S\times S$ the
semigroup operation $\mu\colon S\times S\rightarrow S$ extends to a
continuous semigroup operation $\beta\mu\colon \beta S\times\beta
S\rightarrow\beta S$, so S is a subsemigroup of the compact
topological semigroup $\beta S$. Therefore if $S$ contains the
semigroup $\mathcal{C}$ then $\beta S$ also contains the
semigroup $\mathcal{C}$ which contradicts Corollary~\ref{corollary-5.2}.
\end{proof}

\begin{theorem}\label{theorem-5.5}
The discrete semigroup $\mathcal{C}$ does not embed into a Hausdorff pseudocompact semitopological semigroup $S$ such that $\mathcal{C}$ is a dense subsemigroup of $S$ and $S\setminus\mathcal{C}$ is a left (right, two-sided) ideal of $S$.
\end{theorem}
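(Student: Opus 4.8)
The plan is to argue by contradiction: assume $\mathcal{C}$ is a dense subsemigroup of a Hausdorff pseudocompact semitopological semigroup $S$ with $I:=S\setminus\mathcal{C}$ an ideal. I would first observe that, since $\mathcal{C}$ is discrete and dense in the Hausdorff space $S$, every point of $\mathcal{C}$ is isolated in $S$: for $c\in\mathcal{C}$ pick an open $U\subseteq S$ with $U\cap\mathcal{C}=\{c\}$; then density gives $U\subseteq\operatorname{cl}_S(U\cap\mathcal{C})=\operatorname{cl}_S(\{c\})=\{c\}$, so $U=\{c\}$. I would also record two collapsing identities following from the relations $a^2b=a$ and $ab^2=b$ (peel off one factor $ab$ at a time): for every $n\geqslant1$ one has $a\cdot(ab)^n=a$ and $(ab)^n\cdot b=b$.

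The crucial preliminary step, and the one I expect to be the main obstacle, is to convert the paper's cover-based definition of pseudocompactness into a statement about isolated points: namely, that an infinite set $D$ of isolated points of a Hausdorff pseudocompact space must have an accumulation point. I would prove this by contradiction: if $D$ had no accumulation point it would be closed, so $S\setminus D$ is open, and then $\{\{d\}\mid d\in D\}\cup\{S\setminus D\}$ is an open cover of $S$ that is manifestly locally finite (each of the listed open sets meets exactly one member of the family) and infinite, contradicting the definition of pseudocompactness. The care here lies entirely in assembling an honest locally finite \emph{cover} rather than merely a locally finite family of open sets.

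Next I would apply this to $D=\mathcal{C}_{0,0}=\{(ab)^n\mid n\geqslant1\}$, which is infinite because $\mathcal{C}_{ab}=\langle ab\rangle$ is infinite cyclic by Remark~\ref{remark-2.10a}. Let $z$ be an accumulation point of $D$. Since every point of $\mathcal{C}$ is isolated, no point of $\mathcal{C}$ can be an accumulation point of $D$ (its singleton neighbourhood contains at most one $(ab)^n$), so $z\in I=S\setminus\mathcal{C}$; in particular $z\in\operatorname{cl}_S(D)$ and $I\neq\varnothing$.

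Finally I would play the collapsing identities off against the ideal hypothesis. If $I$ is a left ideal, the left translation $\lambda_a\colon x\mapsto a\cdot x$ is continuous by separate continuity and satisfies $\lambda_a(D)=\{a\}$; hence $\lambda_a(z)\in\operatorname{cl}_S(\lambda_a(D))=\operatorname{cl}_S(\{a\})=\{a\}$, giving $a\cdot z=a\in\mathcal{C}$. But $z\in I$ and $I$ is a left ideal, so $a\cdot z\in I$, a contradiction. If $I$ is a right ideal, the symmetric argument using $\rho_b\colon x\mapsto x\cdot b$ and $(ab)^n\cdot b=b$ yields $z\cdot b=b\in\mathcal{C}$ together with $z\cdot b\in I$, again a contradiction; and the two-sided case follows from either. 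Notably this route bypasses Proposition~\ref{proposition-4.5} entirely, relying instead on the fact that one-sided translation by $a$ (resp.\ by $b$) crushes the whole cyclic subsemigroup $\langle ab\rangle$ onto a single point of $\mathcal{C}$.
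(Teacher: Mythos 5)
Your proof is correct, but it follows a genuinely different route from the paper's. The paper turns the ideal hypothesis into topology: the solution set in $S$ of the single equation $x\cdot ba=ba$ is exactly $\mathcal{C}_{0,0}$, because a solution $x$ lying in $I=S\setminus\mathcal{C}$ would give $ba=x\cdot ba\in I$ by the ideal property, while formula (\ref{eq-2.1}) shows that the solutions inside $\mathcal{C}$ are precisely the powers of $ab$; since $ba$ is an isolated point of $S$ (by the same density-plus-discreteness observation you make at the start) and $\rho_{ba}$ is continuous, this solution set is open-and-closed, so $S$ contains an infinite discrete open-and-closed subspace, contradicting pseudocompactness --- via precisely the locally finite cover $\{\{d\}\mid d\in\mathcal{C}_{0,0}\}\cup\{S\setminus\mathcal{C}_{0,0}\}$ that you assemble in your lemma, a detail the paper leaves implicit. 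You spend the hypotheses in the opposite order: pseudocompactness first, to extract an accumulation point $z$ of $\mathcal{C}_{0,0}$, which must lie in $I$ since the points of $\mathcal{C}$ are isolated; then the collapsing identity $a\cdot(ab)^n=a$ (resp.\ $(ab)^n\cdot b=b$) plus continuity of one translation yields $a\cdot z=a\in\mathcal{C}$ (resp.\ $z\cdot b=b$), so your final contradiction is with the ideal property rather than with pseudocompactness. Your version is algebraically lighter (one collapsing identity instead of determining a full solution set), and it keeps the left/right bookkeeping in the standard convention: for a left ideal ($S\cdot I\subseteq I$) you multiply $z$ on the left by $a$, whereas the paper's left-ideal case manipulates $x\cdot ba$, which is literally the right-ideal computation (harmless, since the two cases are symmetric). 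What the paper's device buys in exchange is the stronger intermediate fact that $\mathcal{C}_{0,0}$ is closed in $S$ --- a conclusion that does not depend on $\mathcal{C}$ being discrete and is exactly what gets recycled in the proof of Theorem~\ref{theorem-5.6}; your accumulation-point argument does not produce that fact as stated, although it easily could, since by formula (\ref{eq-3.3}) the equation $a\cdot x=a$ also has solution set exactly $\mathcal{C}_{0,0}$ inside $\mathcal{C}$, so your identity could be made to play the same role.
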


\begin{proof}
Suppose to the contrary that there exists a Hausdorff pseudocompact semitopological semigroup $S$ which contains $\mathcal{C}$ as a dense discrete subsemigroup and $I=S\setminus\mathcal{C}$ is a left ideal of $S$. Then the set of solutions $\mathscr{S}$ of the equations $x\cdot ba=ba$ in $S$ is a subset of $\mathcal{C}$ and hence by the formula
\begin{equation*}
    b^k(ab)^la^m\cdot ba=
\left\{
  \begin{array}{ll}
    b^{k+1}a,          & \hbox{if~} m=0;\\
    b^{k}(ab)^{l+1}a,  & \hbox{if~} m=1;\\
    b^k(ab)^la^{m},    & \hbox{if~} m>1,\\
  \end{array}
\right.
\end{equation*}
we get that $\mathscr{S}=\mathcal{C}_{0,0}$. Since $ba$ is an isolated point in $S$ and $I$ is a left ideal of $S$ we conclude that the separate continuity of the semigroup operation of $S$ implies that the space $S$ contains a discrete open-and-closed subspace $\mathcal{C}_{0,0}$. This contradicts the pseudocompactness of $S$. The obtained contradiction implies the statement of the theorem. In the case of a right or a two-sided ideal the proof is similar.
\end{proof}

\begin{theorem}\label{theorem-5.6}
The semigroup $\mathcal{C}$ does not embed into a Hausdorff countably compact semitopological semigroup $S$ such that $\mathcal{C}$ is a dense subsemigroup of $S$ and $S\setminus\mathcal{C}$ is a left (right, two-sided) ideal of $S$.
\end{theorem}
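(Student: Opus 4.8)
The plan is to follow the proof of Theorem~\ref{theorem-5.5} up to the point where the ideal hypothesis pins down a solution set inside $\mathcal{C}$, and then, since we may no longer assume that $\mathcal{C}$ carries the discrete topology, to abandon the pseudocompactness step and instead exhibit a \emph{closed} infinite cyclic subsemigroup of $S$ to which Lemma~\ref{lemma-3.7} applies.

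First I would argue by contradiction: assume there is a Hausdorff countably compact semitopological semigroup $S$ containing $\mathcal{C}$ as a dense subsemigroup with $I=S\setminus\mathcal{C}$ an ideal. Arguing exactly as in the proof of Theorem~\ref{theorem-5.5}, the ideal property forces every solution in $S$ of the equation $x\cdot ba=ba$ to lie in $\mathcal{C}$: if $x\in I$ then $x\cdot ba\in I$, so $x\cdot ba\neq ba$. The product formula~(\ref{eq-2.1}) then identifies this solution set with $\mathcal{C}_{0,0}=\mathcal{C}_{ab}=\{(ab)^l\mid l\geqslant 1\}$, the infinite cyclic subsemigroup generated by $ab$ (Remark~\ref{remark-2.10a}$(ii)$). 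This handles a right or two-sided ideal; when $I$ is a left ideal one uses instead the left-translation equation $a\cdot x=a$, whose solution set in $\mathcal{C}$ is again $\mathcal{C}_{0,0}$.

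The decisive new observation is that this solution set is \emph{closed} in $S$: the singleton $\{ba\}$ is closed by Hausdorffness, the right translation $\rho_{ba}\colon S\rightarrow S$ is continuous by separate continuity of the operation, and hence $\mathcal{C}_{0,0}=\rho_{ba}^{-1}(\{ba\})$ is closed. (In contrast to Theorem~\ref{theorem-5.5}, I do \emph{not} assert that it is open, since without discreteness of $\mathcal{C}$ the point $ba$ need not be isolated.) Being a closed subspace of the countably compact space $S$, the subsemigroup $\mathcal{C}_{0,0}$ is countably compact; being countable it is Lindel\"{o}f; and a countably compact Lindel\"{o}f space is compact. Thus $\mathcal{C}_{0,0}$ is compact Hausdorff, so by the Baire category theorem it is a Baire space, and with the subspace topology it is a Hausdorff semitopological semigroup. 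Since $\mathcal{C}_{0,0}$ is an infinite cyclic semigroup, Lemma~\ref{lemma-3.7} then forces its topology to be discrete; but a discrete compact space is finite, contradicting the infinitude of $\mathcal{C}_{0,0}$.

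The step I expect to require the most care is precisely this reorganization: the ``open-and-closed discrete subspace'' contradiction used for pseudocompactness in Theorem~\ref{theorem-5.5} genuinely needs $\mathcal{C}$ to be discrete and is unavailable here, so the crux is to notice that one needs only the \emph{closedness} of the solution set, whereupon countable compactness together with countability upgrades it to a compact, hence Baire, infinite cyclic subsemigroup and Lemma~\ref{lemma-3.7} finishes the argument. The remaining verifications (that the relevant solution set equals $\mathcal{C}_{0,0}$, and the bookkeeping for the three ideal cases) are routine consequences of formula~(\ref{eq-2.1}) already exploited in Theorem~\ref{theorem-5.5}.
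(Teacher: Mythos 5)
Your proof is correct, and its core coincides with the paper's own argument: both use the ideal hypothesis to show that the solution set in $S$ of a translation equation lies in $\mathcal{C}$ and equals $\mathcal{C}_{0,0}$, note that this set is closed in $S$ (preimage of a closed singleton under a continuous translation), upgrade countable compactness to compactness of $\mathcal{C}_{0,0}$ using its countability, invoke the Baire property of compact Hausdorff spaces, and apply Lemma~\ref{lemma-3.7} to the infinite cyclic semitopological semigroup $\mathcal{C}_{0,0}$ to conclude that it is discrete. You part ways only at the finish: you derive an immediate contradiction from the fact that an infinite compact (even countably compact) discrete space cannot exist, whereas the paper instead propagates discreteness from $\mathcal{C}_{0,0}$ to all of $\mathcal{C}$ by the induction scheme of Theorem~\ref{theorem-3.8} and then invokes Theorem~\ref{theorem-5.5} (countable compactness implying pseudocompactness) for the contradiction. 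Your ending is shorter and self-contained, removing the dependence on Theorems~\ref{theorem-3.8} and~\ref{theorem-5.5}; the paper's longer route only adds the (logically unneeded) information that the induced topology on all of $\mathcal{C}$ would have to be discrete. A further minor point in your favour: you keep the sidedness of the ideal straight under the standard convention, using the equation $x\cdot ba=ba$ when $S\setminus\mathcal{C}$ is a right or two-sided ideal and $a\cdot x=a$ when it is a left ideal, whereas the paper applies the right-translation equation in its ``left ideal'' case, which fits the standard convention $S\cdot I\subseteq I$ only after swapping sides.
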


\begin{proof}
Suppose to the contrary that there exists a Hausdorff countably compact semitopological semigroup $S$ which contains $\mathcal{C}$ as a dense subsemigroup and $I=S\setminus\mathcal{C}$ is a left ideal of $S$. Then the arguments presented in the proof of Theorem~\ref{theorem-5.5} imply that $\mathcal{C}_{0,0}$ is a closed subset of $S$, and hence by Theorem~3.10.4 of \cite{Engelking1989} is countably compact. Since $\mathcal{C}_{0,0}$ is countable we have that the space $\mathcal{C}_{0,0}$ is compact. Since every compact space is Baire, Lemma~\ref{lemma-3.7} implies that $\mathcal{C}_{0,0}$ is a discrete subspace of $S$. Then similar arguments as in the proof of Theorem~\ref{theorem-3.8} imply that $\mathcal{C}$, with the topology induced from $S$, is a discrete semigroup, which contradicts Theorem~\ref{theorem-5.5}. The obtained contradiction implies the statement of the theorem.
\end{proof}

\section*{Acknowledgements}

This research was supported by the Slovenian Research Agency grants
P1-0292-0101,  J1-4144-0101 and BI-UA/11-12/001.


\end{document}